\newcommand{\Id}{\operatorname{Id}}
\newcommand{\RR}{\mathbb{R}}
\newcommand{\ZZ}{\mathbb{Z}}
\newcommand{\TT}{\mathbb{T}}
\newcommand{\g}{\gamma}
\newtheorem{theorem}{Theorem}%[section]
\newtheorem{lemma}[theorem]{Lemma}
\newtheorem{definition}[theorem]{Definition}
\newtheorem{proposition}[theorem]{Proposition}
\newtheorem{remark}[theorem]{Remark}
\newcommand{\be}{\begin{equation}}
\newcommand{\ee}{\end{equation}}
\newcommand{\bea}{\begin{eqnarray*}}
	\newcommand{\eea}{\end{eqnarray*}}
\newcommand{\beq}{\begin{eqnarray}}
\newcommand{\eeq}{\end{eqnarray}}
\newcommand{\pmat}[1]{\begin{pmatrix}#1\end{pmatrix}}
\title[Robustness of flat bands]{Robustness of flat bands on the perturbed Kagome and the perturbed Super-Kagome lattice} 
\subjclass[2010]{}
\keywords{}
\author[J.~Kerner]{Joachim Kerner}
\author[M.~T\"aufer]{Matthias T\"aufer}
\author[J.~Wintermayr]{Jens Wintermayr}
\address{Joachim Kerner, Lehrgebiet Analysis, Fakult\"at Mathematik und Informatik, Fern\-Universit\"at in Hagen, D-58084 Hagen, Germany}
\email{joachim.kerner@fernuni-hagen.de}
\address{Matthias T\"aufer, Lehrgebiet Analysis, Fakult\"at Mathematik und Informatik, Fern\-Universit\"at in Hagen, D-58084 Hagen, Germany}
\email{matthias.taeufer@fernuni-hagen.de}
\address{Jens Wintermayr, Bergische Universität Wuppertal,
	Fakultät für Mathematik und Naturwissenschaften, 42119 Wuppertal, Germany}
\email{wintermayr@uni-wuppertal.de}
\date{\today}
\thanks{
}
\begin{document}

\begin{abstract} We study spectral properties of perturbed discrete Laplacians on two-dimen\-sional Archimedean tilings. 
The perturbation manifests itself in the introduction of non-trivial edge weights.  
We focus on the two lattices on which the unperturbed Laplacian exhibits flat bands, namely the  $(3.6)^2$ Kagome lattice and the $(3.12)^2$ ``Super-Kagome'' lattice. We characterize all possible choices for edge weights which lead to flat bands. 
Furthermore, we discuss spectral consequences such as the emergence of new band gaps. 
Among our main findings is that flat bands are robust under physically reasonable assumptions on the perturbation and we completely describe the perturbation-spectrum phase diagram.
The two flat bands in the Super-Kagome lattice are shown to even exhibit an ``all-or-nothing'' phenomenon in the sense that there is no perturbation which can destroy only one flat band while preserving the other.
\end{abstract}

\maketitle

\section{Introduction}
    \label{sec:intro}
This paper is about discrete Schr\"odinger operators on Archimedean tilings, a class of periodic two-dimensional lattices that were already investigated by Johannes Kepler in $1619$~\cite{Kepler-1619}. 
They are natural candidates for the geometry of two-dimensional nanomaterials and due to advances in this field, most prominently represented by graphene, they have become increasingly a focus of attention.

Much work has been devoted to understanding physical properties of such (new) materials~\cite{SuperKagomeConduct,TWGK,Crasto_de_Lima}. 
Most importantly, it can be expected that the underlying geometry, that is the particular lattice, is a key feature determining physical properties of the system. 
In fact, in particular in the mathematical physics literature, investigations of the connection between the geometry (or topology) of a system and the spectral properties of the associated Hamiltonian have become ubiquitous. 
Classical examples in this context are so-called quantum waveguides \cite{ExnerK_2015,Exner_2020,PEThreeDim} as well as quantum graphs~\cite{BerKuch, BaradaranE-22}; see also~\cite{KuchmentP-07} for a relatively recent reference relevant in our context.

A closely related research direction is superconductivity: the existence of a boundary leads to boundary states in a superconductor with a higher critical temperature than the one of the bulk~\cite{SamoilenkaB_20,PhysRevB.103.224516,HSR}. In this spirit, it seems very promising to also study the interplay of geometry and many-particle phenomena on Archimedean tilings. 
Yet another related investigation can be found in \cite{JulkuBEC,SuperKagomeConduct} where another important quantum phenomenon, namely Bose-Einstein condensation, is examined. It turns out that so-called \textit{flat bands}, that are infinitely degenerate eigenvalues of the Hamiltonian, play an important role in understanding such many-particle effects, and for other physical phenomena~\cite{KollaFSH-19}. 
One of the central motivations for this paper is to study robustness of flat bands under certain natural perturbations. 

Two Archimedean tilings, the $(3.6)^2$ \emph{Kagome lattice} and the $(3.12^2)$ tiling
\footnote{We explain the notation for the lattices in Section~\ref{sec:Archimedean_tilings}.}, which we shall dub \emph{Super-Kagome lattice} for reasons that will become clear over the course of the article, stand out: they are the only Archimedean lattices on which the discrete, unweighted Laplacian has flat bands.
In particular the Kagome lattice is a prominent model in physics that has recently enjoyed increasing interest~\cite{PhysRevB.98.235109,KagomeWB,ThesisDias}. 
From a mathematical point of view, our paper is motivated by~\cite{TaeuferPeyerimhoff} where flat bands for the discrete, unweighted Laplacian on Archimedean tilings have been studied in great detail, in combination with an explicit calculation of the integrated density of states. 

A priory, the flat-band phenomena on the Kagome and Super-Kagome lattice seem very sensitive to perturbations: if one replaces the adjacency matrix or the Laplacian by a variant with periodically chosen edge weights, one will generically destroy flat bands.
However, the results of this paper suggest that, if one looks at proper, meaningful variants of the discrete Laplacian which respect certain, natural symmetries of the tiling (we call them \emph{monomeric} Laplacians in Definition~\ref{DefinitionSymmetry}), then flat bands will persist.
Since monomericity is a physically justifiable assumption, this makes a strong case that flat bands are a robust phenomenon, caused by the geometry of the lattice alone and specific to these two lattices, see Theorems~\ref{thm:FlatBandsKagome}, and~\ref{thm:FlatBandsSuperKagome}.

Other questions of interest on periodic graphs concern existence, persistence and estimates on the width of spectral bands and the gaps between them~\cite{KorotyaevS-19,KorotyaevS-19,Mohar-89}.
We will completely identify the spectra as a function of the perturbation in these cases, see Theorems~\ref{thm:BandGapKagome}, and~\ref{thm:SUPERKagomeGaps} as well as Figures~\ref{fig:Kagome}, and~\ref{fig:Super_Kagome}.
This provides an exhaustive description of all nanomaterials based on Archimedean tilings on which discrete Laplacians can exhibit flat bands.

Our paper is organized as follows: 
Sections~\ref{sec:Archimedean_tilings}, and~\ref{sec:Laplacian} are of introductory nature, introducing the notion of and arguing for the relevance of Archimedean tilings, and defining a proper notion of a discrete Laplace operator with non-uniform edge weights.
Section~\ref{sec:Laplacian} also introduces the notion of flat bands and argues why it suffices to restrict our attention to the $(3.6)^2$ Kagome and the $(3.12^2)$ Super-Kagome lattice.
Sections~\ref{sec:Kagome}, and~\ref{sec:Super_Kagome} contain our main results on the Kagome and Super Kagome lattice, respectively.
The contributions of this paper are:
\begin{enumerate}[(i)]
 \item 
 We identify the Kagome and Super-Kagome lattice as the only Archimedean lattices on which a natural class of periodic, weighted Laplacians can have flat bands (Proposition~\ref{prop:no_surprises}).
 \item
 We describe all periodic edge weights which lead to the maximal possible number of bands on the Kagome and Super-Kagome lattice, and prove that this is equivalent to so-called \textbf{monomericity} of the edge weights (Theorems~\ref{thm:FlatBandsKagome} and~\ref{thm:FlatBandsSuperKagome}).
 \item
 We completely describe the spectrum in the monomeric Kagome and Super-Kagome lattice (Theorems~\ref{thm:BandGapKagome} and~\ref{thm:SUPERKagomeGaps}).
 In particular, the monomeric Super-Kagome lattice has a surprisingly rich spectrum-perturbation phase diagram (Figure~\ref{fig:Super_Kagome}) which might bear relevance for various applications.
 \item
 In the Super-Kagome lattice, under a weaker condition than monomericity, namely \textbf{constant vertex weight}, we explicitely describe all remaining ``spurious'' edge weights which have only one flat band.
 We describe the topology of this set within the parameter space and show in particular that it is disconnected from the monomeric two-band set (Theorem~\ref{thm:one_flat_band_SK}).
\end{enumerate}

\section{Archimedean tilings}
    \label{sec:Archimedean_tilings}

\emph{Archimedean}, \emph{Keplerian} or \emph{regular} tilings are edge-to-edge tesselations of the Euclidean plane by regular convex polygons such that every vertex is surrounded by the same pattern of adjacent polygons.
We will adopt the notation of~\cite{GruenbaumS-89} and use the (counterclockwise) order of polygons arranged around a vertex as a symbol for a tiling (this is unique up to cyclic permutations), see Figure~\ref{fig:11_tilings} for the $(3.6)^2$ Kagome lattice and the $(3.12^2)$ Super-Kagome lattice which will be investigated in this paper.
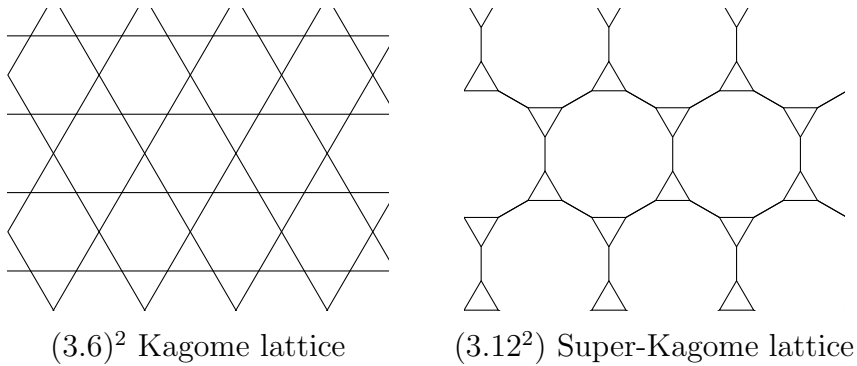
\begin{figure}[ht]
 \begin{tikzpicture}[scale = 2]

% (3.6)^2
\begin{scope}[yshift = -3.5cm]
\draw (1.25,-.25) node {$(3.6)^2$ Kagome lattice};
 \clip (0,0) rectangle (2.5,2);
\begin{scope}[scale = .3]
  \foreach \i in {0,...,6}
    {
    \foreach \j in {0,...,3}
        {
        \pgfmathsetmacro{\ii}{2*\i}
        \pgfmathsetmacro{\jj}{4*\j*0.86602540378}
        \begin{scope}[xshift = \ii cm, yshift = \jj cm]
                \draw (0:1) -- (60:1) -- (120:1) -- (180:1) -- (240:1) -- (300:1) -- (360:1);
                
            \begin{scope}[xshift = 1cm, yshift = 2*0.86602540378cm]
                \draw (0:1) -- (60:1) -- (120:1) -- (180:1) -- (240:1) -- (300:1) -- (360:1);
            \end{scope}
        \end{scope}
        }
    }
\end{scope}
\end{scope}    
    
%   3.12^2
\begin{scope}[xshift = 3cm, yshift = -3.5cm]
\draw (1.25,-.25) node {$(3.12^2)$ Super-Kagome lattice};
 \clip (0,0) rectangle (2.5,2);
\begin{scope}[scale = .225] 
   \foreach \i in {0,...,6}
    {
    \foreach \j in {0,...,3}
        {
        \pgfmathsetmacro{\ii}{\i * (2*0.86602540378 + 2)}
        \pgfmathsetmacro{\jj}{\j * (4 * 0.86602540378 + 3)}
        \begin{scope}[xshift = \ii cm, yshift = \jj cm]
        \draw (0,0) -- (1,0) -- (.5,0.86602540378) -- (0,0);
        \draw (.5,0.86602540378) -- (.5,1.86602540378);
        \draw (.5,1.86602540378) -- (1,1.86602540378 + 0.86602540378) -- (0,1.86602540378 + 0.86602540378) -- (.5,1.86602540378);
            \begin{scope}
            \draw (210:1) -- (0,0);
            \end{scope}
            \begin{scope}[xshift = 1cm]
            \draw (330:1) -- (0,0);
            \end{scope}
            \pgfmathsetmacro{\l}{2*0.86602540378 + 1}
            \begin{scope}[yshift = \l cm]
            \draw (150:1) -- (0,0);
            \end{scope}
            \begin{scope}[xshift=1cm, yshift = \l cm]
            \draw (30:1) -- (0,0);
            \end{scope}
        \end{scope}
        
        \pgfmathsetmacro{\iii}{\ii + (0.86602540378 + 1)}
        \pgfmathsetmacro{\jjj}{\jj + (2 * 0.86602540378 + 1.5)}
        
        \begin{scope}[xshift = \iii cm, yshift = \jjj cm]
        \draw (0,0) -- (1,0) -- (.5,0.86602540378) -- (0,0);
        \draw (.5,0.86602540378) -- (.5,1.86602540378);
        \draw (.5,1.86602540378) -- (1,1.86602540378 + 0.86602540378) -- (0,1.86602540378 + 0.86602540378) -- (.5,1.86602540378);
            \begin{scope}
            \draw (210:1) -- (0,0);
            \end{scope}
            \begin{scope}[xshift = 1cm]
            \draw (330:1) -- (0,0);
            \end{scope}
            \pgfmathsetmacro{\l}{2*0.86602540378 + 1}
            \begin{scope}[yshift = \l cm]
            \draw (150:1) -- (0,0);
            \end{scope}
            \begin{scope}[xshift=1cm, yshift = \l cm]
            \draw (30:1) -- (0,0);
            \end{scope}
        \end{scope}
        }
    }
\end{scope}
\end{scope}

 \end{tikzpicture}

\caption{The two Archimedean tilings primarily investigated in this article.}
\label{fig:11_tilings}
\end{figure}

The first systematic investigation from $1619$ is due to Kepler who identified all $11$ such tilings~\cite{Kepler-1619}\footnote{All $11$ Archimedean tilings are: the $(4^4)$ rectangular tiling, the $(3^6)$ triangular tiling, the $(6^3)$ hexa\-gonal tiling, the $(3.6^2)$ Kagome lattice, the $(3.12^2)$ Super-Kagome lattice, the $(3^3.4^2)$ tiling, the $(4.8^2)$ tiling, the $(3^2.4.3.4)$ tiling, the $(3.4.6.4)$ tiling, the $(4.6.12)$ tiling, and the $(3^4.6)$ tiling.}.
% where it should be noted that the $(3^4.6)$ tiling is the only one for which two ``right-handed'' and ``left-handed'' variants exist: Its reflection on a line cannot be turned back into the original lattice by rotation and translation, i.e. there are less symmetries in the $(3^4.6)$ lattice than in the other $10$.
Most importantly, Archimedean tilings provide natural candidates for geometries of two-dimensional nanomaterials
since they form natural, symmetric arrangements of a single buiding block, positioned at every vertex.
And indeed, these lattices can be observed in many naturally occurring materials~\cite{FK1,FK2,Kulkarni_2020}. 

From a physical point of view, two-dimensional materials such as graphene are interesting since they feature so-called \textit{Dirac points} which are related to a specific behaviour of the electronic band structure of the material~\cite{FW,C3CP53257G,HCDirac}.

Also note that there are deep connections between Laplacians on these lattices, percolation, and self-avoiding walks which have also been studied extensively ~\cite{SykesE-64, Kesten-80,Nienhuis-82, Sud99, Veselic-04, Par07, Jac14, JacobsenSG_16}. An important quantity in this context is the so-called  \emph{connective constant}, which is known only in few cases, for example on the hexagonal lattice~\cite{Duminil-CopinS-12}.

\section{Defining a suitable Hamiltonian}
    \label{sec:Laplacian}

Every Archimedean tiling can be regarded as an infinite discrete graph $G=(V,E)$ with (countable) vertex set $V$ and (countable) edge set $E$.
We write $v \sim w$ if the vertices $v$ and $w$ are joined by an edge and denote by 
\[
\lvert v \rvert := \# \{ w \in V \colon v \sim w \} 
\]
the vertex degree of $v$ (which in the case of Archimedean lattice graphs is $v$-independent).
Archimedean lattices are $\ZZ^2$-periodic, and there exists a cofinite $\ZZ^2$-action 
\[
    \ZZ^2 \ni \beta \mapsto T_\beta \colon V \to V\ ,
\]
that is a group of graph isomorphisms (intuitively understood as a group of shifts) isomorphic to the group $\ZZ^2$.
Let $Q \subset V$ be a minimal (in particular finite) fundamental domain of this action, i.e. the quotient of $V$ under the equivalence relation generated by the group of isomorphisms $(T_\beta)_{\beta \in \ZZ^2}$.

In the unweighted case, a natural, normalized choice for the Hamiltonian is the discrete Laplacian
\begin{equation}
 \label{eq:unweighted_Laplacian}
 (\Delta f)(v)
 :=
 \frac{1}{\lvert v \rvert}
 \sum_{w \sim v}
 \left(f(v) - f(w) \right)
 =
 f(v)
 -
 \frac{1}{\lvert v \rvert}
 \sum_{w \sim v}
 f(w)\ ,
\end{equation}
as used for instance in~\cite{TaeuferPeyerimhoff}.
It can be written as $\Delta f = \Id - \frac{1}{\lvert v \rvert} \Pi$ where $\Pi$ is the adjacency matrix, that is $\Pi(v,w) = 1$ if $v \sim w$ and $0$ else.
The following is standard:
\begin{lemma}
The unweighted, normalized Laplacian~\eqref{eq:unweighted_Laplacian} with a uniformly bounded vertex degree boasts the following properties:
\begin{enumerate}[(i)]
 \item 
 All restrictions of $\Delta$ to finitely many vertices are real-symmetric $M$-matrices, that is, their off-diagonal elements are non-positive¸ and all their eigenvalues are non-negative.
 \item
 The infimum of the spectrum of $\Delta$ is $0$.
 \item
 All rows and columns of $\Delta$ sum to zero.
\end{enumerate}
Furthermore, the spectrum is always contained in the interval $[0,2]$.
\end{lemma}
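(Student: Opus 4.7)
The plan is to exploit that each Archimedean lattice is vertex-transitive, so the vertex degree $\lvert v \rvert$ is in fact a constant $d$, whence $\Delta = \Id - d^{-1}\Pi$ with a bounded, symmetric adjacency matrix $\Pi$ on $\ell^2(V)$. All four claims are then fairly mechanical, so I will break the work up by claim.

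Parts (i) and (iii) amount to bookkeeping. The matrix entries are $\Delta(v,v)=1$, $\Delta(v,w)=-1/d$ for $v\sim w$, and zero otherwise, which immediately gives symmetry of finite restrictions, non-positivity of off-diagonal entries, and $\sum_w \Delta(v,w) = 1 - d/d = 0$ (with column sums following by symmetry). For non-negativity of eigenvalues of any finite restriction, I would first prove the stronger statement $\Delta \geq 0$ on $\ell^2(V)$ via the quadratic-form identity
\[
\langle f, \Delta f \rangle \;=\; \frac{1}{2d}\sum_{v\in V}\sum_{w\sim v}\lvert f(v)-f(w)\rvert^2 \;\geq\; 0,
\]
obtained by direct expansion, and then invoke Cauchy interlacing to pass non-negativity to any principal submatrix.

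For the upper bound $\sigma(\Delta)\subseteq[0,2]$, I would apply Cauchy--Schwarz to $(\Pi f)(v)=\sum_{w\sim v}f(w)$ and double-count over the edge set to get $\|\Pi\|_{\ell^2\to\ell^2} \leq d$; therefore $\|\Delta\|\leq 1 + \|d^{-1}\Pi\|\leq 2$, which combined with the non-negativity from (i) confines the spectrum to $[0,2]$.

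Finally, for (ii), the constant function lies in the formal kernel of $\Delta$ but is not in $\ell^2$, so I would build a Weyl sequence witnessing $0\in\sigma(\Delta)$. Let $B_n \subset V$ be the graph ball of radius $n$ around a fixed vertex and set $f_n := \lvert B_n\rvert^{-1/2}\mathbf{1}_{B_n}$. Then $\Delta f_n$ is supported within graph distance $1$ of $\partial B_n$ and is pointwise bounded by $1$, so $\|\Delta f_n\|^2 \leq \lvert\partial B_n\rvert / \lvert B_n\rvert$. Since Archimedean lattices are quasi-isometric to $\RR^2$ they have polynomial volume growth of order $2$, so the isoperimetric ratio tends to zero and $\|\Delta f_n\|\to 0$, proving $0\in\sigma(\Delta)$; together with (i) this yields $\inf\sigma(\Delta)=0$. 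No step here is conceptually deep, the only mildly delicate piece being the verification that the boundary-to-volume ratio of $B_n$ vanishes, which is automatic from the two-dimensional geometry of Archimedean tilings.
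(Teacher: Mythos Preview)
The paper gives no proof of this lemma at all --- it is introduced with ``The following is standard'' and left unproved --- so there is nothing to compare your argument against. Your proof is correct and fills in the details in the expected way: the quadratic-form identity for positivity, the norm bound $\|\Pi\|\le d$ via Cauchy--Schwarz and double counting, and a F{\o}lner/Weyl-sequence argument using polynomial growth for $0\in\sigma(\Delta)$.

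Two small points of phrasing. First, what you invoke for part~(i) is not really Cauchy interlacing but the simpler fact that a principal compression of a positive operator is positive: extending $x\in\CC^S$ by zero gives $x^\ast \Delta_S x=\langle \tilde x,\Delta\tilde x\rangle\ge 0$. Second, in part~(ii) the sentence ``$\Delta f_n$ is \dots\ pointwise bounded by $1$'' does not by itself yield $\|\Delta f_n\|^2\le |\partial B_n|/|B_n|$; what you need (and clearly intend) is that $\Delta\mathbf{1}_{B_n}$ is pointwise bounded by $1$, so that after normalisation $|\Delta f_n|\le |B_n|^{-1/2}$ on a set of size comparable to $|\partial B_n|$. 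With that correction the estimate follows exactly as you write.
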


Introducing non-trivial edge weights, we would like to keep a form of the Laplacian that preserves properties (i) to (iii). 
A natural candidate, similar to formula~(2.11) in \cite{KorotyaevS-21}, is
\begin{equation}\label{eq:Laplacian_KS}
(\Delta_{\gamma}f)(v):=\frac{1}{\sqrt{\mu(v)}}\sum_{w\sim v}\gamma_{vw}\left(\frac{f(v)}{\sqrt{\mu(v)}}-\frac{f(w)}{\sqrt{\mu(w)}}\right)
\end{equation}
where the \emph{edge weights} $\gamma_{vw}=\gamma_{wv} > 0$ and \emph{vertex weights} $\mu(v)$ satisfy the relation
\begin{equation}
    \label{eq:vertex_and_edge_weights}
 \sum_{w \sim v} \gamma_{vw} = \mu_v
 \quad
 \text{for every $v \in V$}.
\end{equation}
As long as the vertex weights $\mu(v)$ (and thus also the $\gamma_{vw}$) are uniformly bounded, this will lead to an operator with properties (i) to (iii) and spectrum contained in $[0,2]$.

\begin{remark}
 In the literature, one often finds the definition
 \[
  (\Delta_\gamma f) (v)
  =
  \frac{1}{\mu(v)} 
  \sum_{w \sim v}
  \gamma_{vw}
  \left(
  f(v) - f(w)
  \right)
 \]
 as a normalized, discrete Laplacian.
 Note that, whenever $\mu(v) \neq \mu(w)$ for some $v \sim w$, then this will not lead to a self-adjoint operator, but it can be made self-adjoint on a suitably weighted $\ell^2(V)$-space, cf.~\cite{KellerLenzGraphs}.
 If all $\mu(v)$ are the same, then this definition coincides with~\eqref{eq:Laplacian_KS}, and can be simplified to
 \begin{equation}\label{eq:laplacian}
(\Delta_{\gamma}f)(v)=f(v)-\frac{1}{\mu}\sum_{w \sim v} \gamma_{wv}f(w)\ .
\end{equation}
\end{remark}

Now, one can prescribe various degrees of the symmetry of the underlying Archimedean lattice to be respected by the Laplacian:
\begin{definition}\label{DefinitionSymmetry}
 Consider an Archimedean tiling $(V,E)$ with periodic edge weights $\gamma_{vw} = \gamma_{wv} > 0$, that is $\gamma_{vw} = \gamma_{T_\beta v T_{\beta} w}$ for all $v,w \in V$ and $\beta \in \ZZ^2$, and corresponding vertex weights $\mu(v) = \sum_{w \sim v} \gamma_{vw}$. Define the Laplacian $\Delta_\gamma$ as in~\eqref{eq:Laplacian_KS}.
 Then, we say that the Archimedean tiling with Laplacian $\Delta_\gamma$
 \begin{enumerate}[(1)]
  \item
  has \textbf{constant vertex weight}, if there is $\mu > 0$ such that $\mu(v) = \mu$ for all $v \in V$.
  \item
  is \textbf{monomeric} if for all vertices $v \in V$ the list of edge weights, arranged cyclically around $v$, coincides (up to cyclic permutations).
  \end{enumerate} 
\end{definition}
Clearly, (2) is stronger than (1). However, in either case, the Laplacian reduces to~\eqref{eq:laplacian}.

% \begin{remark}
The term ``monomeric'' is inspired by the fact that the associated operators can be interpreted as describing properties of nanomaterials formed from one type of monomeric building block, positioned at every vertex of an Archimedean tiling.
Clearly, monomeric Laplacians on Archimedean lattices have constant vertex weights, but the converse is not true in general.
However, we will see in Theorems~\ref{thm:FlatBandsKagome} and~\ref{thm:FlatBandsSuperKagome} that on the Kagome and Super-Kagome lattice, the validity of the converse implication is equivalent to existence (or persistence) of all flat bands. 
Also, monomericity seems a physically reasonable assumption for nanomaterials, which suggests that the emergence of flat bands, while a priori very sensitive to perturbations of coefficients in the operator, might nevertheless be robust within the class of physically relevant operators.
% \end{remark}

Next, let $\TT^2 = \RR^2 / \ZZ^2$ be the flat torus and define for every $\theta \in \TT^2$ the $|Q|$-dimensional Hilbert space
\begin{equation*}
\ell^2(V)_{\theta}:=\{f:V \rightarrow \mathbb{C}\ |\ f(T_{\beta}v)=e^{i\langle \theta,\beta \rangle}f(v)  \}\ 
\end{equation*}
with inner product 
\begin{equation*}
\langle f,g\rangle_{\theta}:=\sum_{v \in Q}f(v)\overline{g(v)}\ .
\end{equation*}
Given the Laplacian~\eqref{eq:laplacian} on $\ell^2(V)$ with properties described in Definition~\ref{DefinitionSymmetry}, we define on $\ell^2(V)_{\theta}$ the operator 
\begin{equation}\label{eq:Laplacian_2}
(\Delta^{\theta}_{\gamma}f)(v):=f(v)-\frac{1}{\mu}\sum_{w \sim v} \gamma_{wv}f(w)\ .
\end{equation}
Clearly, \eqref{eq:Laplacian_2} can be represented as a $\lvert Q \rvert$-dimensional Hermitian matrix.
Due to Floquet theory, we have
\[
 \sigma (\Delta_\gamma)
 =
 \bigcup_{\theta \in \TT^2}
 \sigma(\Delta_\gamma^\theta)\ ,
\]
and the following statement holds.
\begin{proposition}[See~\cite{TaeuferPeyerimhoff} and references therein]\label{PropositionFlatBand}
 Let $E \in \RR$. 
 Then, the following are equivalent:
 \begin{enumerate}[(i)]
  \item 
  $E \in \sigma( \Delta_\gamma^\theta)$ for all $\theta \in \TT^2$.
  \item
  $E \in \sigma( \Delta_\gamma^\theta)$ for a positive measure subset of $\theta \in \TT^2$.
  \item
  There is an infinite orthonormal family eigenfunctions of $\Delta_\gamma$ to the eigenvalue $E$.
  Each of them can be chosen to be supported on a finite number of vertices.
 \end{enumerate}
 If any of (i) to (iii) is satisfied, we say that $\Delta_\gamma$ has a \textbf{flat band} (at energy $E$).
\end{proposition}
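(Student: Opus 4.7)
The plan is to exploit the Floquet--Bloch decomposition. In the basis $\{\delta_v\}_{v \in Q}$ of $\ell^2(V)_\theta$, each matrix entry of $\Delta_\gamma^\theta$ is a finite Laurent polynomial in $z_j := e^{i\theta_j}$, $j = 1,2$ (each term arising from an edge crossing from $Q$ to a shifted copy $T_\beta Q$). In particular, $p(\theta, E) := \det(\Delta_\gamma^\theta - E\,\Id)$ is a trigonometric polynomial in $\theta$, hence real-analytic on $\TT^2$.

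The equivalence (i) $\Leftrightarrow$ (ii) then follows immediately: a real-analytic function on $\TT^2$ that vanishes on a set of positive measure must vanish identically. For (iii) $\Rightarrow$ (ii), given a non-zero finitely supported eigenfunction $\psi$, the periodic superposition
\[
 \psi_\theta(v) := \sum_{\beta \in \ZZ^2} e^{i\langle \theta, \beta\rangle} \psi(T_{-\beta} v),\quad v \in Q,
\]
is a finite sum, extends to an element of $\ell^2(V)_\theta$ by the Floquet condition, and satisfies $\Delta_\gamma^\theta \psi_\theta = E \psi_\theta$ because $\Delta_\gamma$ is local and $\ZZ^2$-equivariant. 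Since $\theta \mapsto \psi_\theta$ is a non-zero trigonometric polynomial family (its Fourier coefficients reproduce the values of $\psi$), $\psi_\theta \neq 0$ on an open dense subset of $\TT^2$, yielding (ii).

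The main step is (i) $\Rightarrow$ (iii). Set $K(\theta) := \Delta_\gamma^\theta - E\,\Id$ and view it as a matrix over the Laurent polynomial ring $R := \CC[z_1^{\pm 1}, z_2^{\pm 1}]$. By (i), $\det K \equiv 0$, so $K$ fails to be invertible even over the fraction field $\CC(z_1, z_2)$ of $R$. Picking a non-zero kernel vector over this field and clearing denominators produces a non-zero vector $c(\theta) = (c_v(\theta))_{v \in Q}$ with entries $c_v(\theta) = \sum_\alpha a_{v,\alpha} e^{i\langle \alpha, \theta\rangle}$ in $R$ satisfying $K(\theta)\,c(\theta) = 0$ identically in $\theta$. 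The inverse Floquet transform of $c$ is the finitely supported $\psi \in \ell^2(V)$ given by $\psi(T_\alpha v) := a_{v, -\alpha}$ ($v \in Q$, $\alpha \in \ZZ^2$), which satisfies $\Delta_\gamma \psi = E \psi$. Finally, translating $\psi$ along a sufficiently far-apart sequence $\{\beta_k\} \subset \ZZ^2$ yields infinitely many finitely supported eigenfunctions with pairwise disjoint supports, which after normalization form the sought orthonormal family.

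The hard part is the clearing-of-denominators step: even in the degenerate case where the classical adjugate $K(\theta)^{\mathrm{adj}}$ vanishes identically (i.e.\ $\dim \ker K(\theta) \geq 2$ for every $\theta$), one still needs a non-zero polynomial kernel vector. This is where the integral-domain structure of $R$ is essential: a square matrix over an integral domain with zero determinant has a non-trivial kernel over the fraction field, and the denominators of any rational kernel vector can be cleared to produce a polynomial one.
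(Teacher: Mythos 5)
Your argument is correct: the analyticity of $\theta \mapsto \det(\Delta_\gamma^\theta - E\,\Id)$ gives (i)$\Leftrightarrow$(ii), and the Laurent-polynomial kernel vector over $\CC(z_1,z_2)$, transformed back into a finitely supported eigenfunction and then translated along far-apart lattice vectors, gives (i)$\Rightarrow$(iii)$\Rightarrow$(ii). The paper itself offers no proof, deferring to~\cite{TaeuferPeyerimhoff} and the references therein, and your Floquet-theoretic construction is essentially the standard argument used there, so there is nothing to add beyond the routine remark that $\det K$ vanishing on the real torus forces it to vanish identically as a Laurent polynomial.
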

Note that, in the $\ell^\infty(V)$ setting instead of the $\ell^2(V)$ setting, such infinitely degenerate eigenvalues are also referred to as ``black hole eigenvalues'' in~\cite{vonBelowL-09}.
Also, the existence of flat bands can be interpreted as a breakdown of the unique continuation principle~\cite{PeyerimhoffTV-17}.

In the Hilbert space $\ell^2(V)$ setting, is known that for constant edge weights, the discrete Laplacian has flat bands only on two of the $11$ Archimedean lattices, namely the $(3.6)^2$ Kagome lattice and the $(3.12^2)$ Super-Kagome lattice~\cite{TaeuferPeyerimhoff}.
Before turning to perturbed versions of those two lattices, one should verify that there won't be any surprises on the other lattices:
\begin{proposition}
 \label{prop:no_surprises}
 On the Archimedean lattices $(4^4)$, $(3^6)$, $(6^3)$, $(3^3.4^2)$, $(4.8^2)$, $(3^2.4.3.4)$, $(3.4.6.4)$, $(4.6.12)$, $(3^4.6)$, there is no choice of periodic (with respect to the fundamental cell on the lattice) edge weights $\gamma_{vw} = \gamma_{wv} > 0$ which will make the weighted adjacency matrix
 \[
  \Pi_\gamma(v,w) 
  =
  \begin{cases}
   \gamma_{vw} & \text{if $v \sim w$},\\
   0 & \text{else}
  \end{cases}
 \]
 have a flat band.
 Consequently, also the Laplacian with constant or monomeric edge weights has no flat bands on these lattices.
\end{proposition}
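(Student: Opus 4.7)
The plan is to use Floquet theory to reduce the question to a finite-dimensional one. By Floquet decomposition (the same argument underlying Proposition~\ref{PropositionFlatBand} applies verbatim to $\Pi_\gamma$), the weighted adjacency matrix $\Pi_\gamma$ has a flat band at energy $E_0$ if and only if $E_0$ is an eigenvalue of the $|Q| \times |Q|$ Floquet matrix $\Pi_\gamma^\theta$ for \emph{every} $\theta \in \TT^2$. Writing the characteristic polynomial
\[
 P(\theta, E) := \det(E \cdot \Id - \Pi_\gamma^\theta)
\]
as a polynomial in $E$ whose coefficients are Laurent polynomials in $z_1 := \euler^{i \theta_1}$, $z_2 := \euler^{i \theta_2}$, the existence of a flat band becomes equivalent to a factorization $P(\theta, E) = (E - E_0) \cdot R(\theta, E)$ with $E_0 \in \RR$ independent of $\theta$.

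For each of the nine lattices I would fix a minimal fundamental cell and write $\Pi_\gamma^\theta$ explicitly. Its $(i,j)$-entry is a Laurent polynomial in $z_1, z_2$ with strictly positive coefficients, namely one positive coefficient per edge between $q_i \in Q$ and a $\ZZ^2$-translate of $q_j \in Q$. I then need to show that no positive choice of these coefficients allows $P(\theta, E_0) \equiv 0$ in $\theta$ for any $E_0 \in \RR$. For the smallest lattices this is immediate: on $(3^6)$ and $(4^4)$ one has $|Q|=1$ and $\Pi_\gamma^\theta$ reduces to a real trigonometric polynomial of the form $\sum_\beta 2 \gamma_\beta \cos \langle \theta, \beta \rangle$, which is non-constant under the positivity assumption; on $(6^3)$ one has $|Q|=2$ and the eigenvalues are $\pm |g(\theta)|$ where $g$ is a sum of three positively weighted complex exponentials, so $|g|^2$ expands as a trigonometric polynomial whose oscillatory terms all have strictly positive amplitudes.

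For the six remaining lattices I would proceed by comparing Fourier coefficients. Viewing $P(\theta, E_0)$ as a Laurent polynomial in $z_1, z_2$, identical vanishing in $\theta$ forces \emph{each} non-constant Fourier coefficient to vanish, yielding a system of polynomial identities in the $\gamma_{vw}$. A typical monomial $z_1^{a} z_2^{b}$ in $P(\theta, E_0)$ corresponds to a cycle in the quotient graph, and its coefficient is a polynomial in the $\gamma_{vw}$ whose monomials all carry the same sign, hence it cannot vanish under $\gamma_{vw} > 0$. For each lattice I would exploit the additional point-group symmetries of the Archimedean tiling to shrink the space of periodic edge weights down to a small number of free parameters before performing the bookkeeping.

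The main obstacle is simply the tedium of walking through all nine lattices. An alternative, more conceptual route is to use Proposition~\ref{PropositionFlatBand}(iii): attempt to build a compactly supported eigenfunction and show that the resulting linear system on its finite support, together with the vanishing conditions on the outer boundary, is overdetermined unless some $\gamma_{vw}$ degenerates. This route makes transparent why the Kagome and Super-Kagome lattices are the \emph{only} exceptions: they contain short ``resonant cycles'' (hexagons and dodecagons respectively) around which a sign-alternating eigenfunction can be supported, a feature absent in the other nine tilings.
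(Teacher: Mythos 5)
Your overall strategy --- reduce via Floquet theory to the question of whether $\det(E\,\Id - \Pi_\gamma^\theta)$ can have a $\theta$-independent root, and then compare Fourier coefficients in $z_1 = \euler^{i\theta_1}$, $z_2 = \euler^{i\theta_2}$ --- is exactly the route the paper takes: its (omitted) proof of Proposition~\ref{prop:no_surprises} consists of computing the characteristic polynomials of the Floquet matrices lattice by lattice and checking that no $\theta$-independent root exists, in parallel with the proofs of Theorems~\ref{thm:FlatBandsKagome} and~\ref{thm:FlatBandsSuperKagome}. Your explicit treatment of $(4^4)$, $(3^6)$ and $(6^3)$ is correct.

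The genuine gap lies in the mechanism you propose for the remaining six lattices, namely the claim that the coefficient of a non-constant monomial $z_1^a z_2^b$ in $P(\theta,E_0)$ is a polynomial in the $\gamma_{vw}$ whose monomials all carry the same sign and hence cannot vanish for positive weights. This is false as a general structural principle, and it proves too much: these Fourier coefficients are polynomials in the $\gamma_{vw}$ \emph{and} in $E_0$, and $E_0$ must be allowed to be negative. On the Kagome lattice the coefficient of $w+\overline{w}$ is $\lambda\gamma_3\gamma_6 + \gamma_2\gamma_3\gamma_4 + \gamma_1\gamma_5\gamma_6$, which does vanish for positive (monomeric) weights precisely because the flat-band energy is negative; on the Super-Kagome lattice the analogous coefficients even contain $\lambda$-free monomials of negative sign, such as $-\gamma_2^2\gamma_7\gamma_8\gamma_9^2$. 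So sign-definiteness of the Fourier coefficients is exactly what fails whenever a flat band exists, and it cannot be asserted a priori for the nine lattices in question; it (or rather the non-solvability of the resulting polynomial system in positive $\gamma_{vw}$, for every real $E_0$) has to emerge from the actual lattice-by-lattice computation. In other words, the ``bookkeeping'' you defer is not a routine verification of an argument already in place --- it is the entire content of the proposition, and neither it nor the alternative compactly-supported-eigenfunction sketch (stated only heuristically) is carried out. As it stands, the argument for the six larger lattices is incomplete.
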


Proposition~\ref{prop:no_surprises} is proved by a series of straightforward but somewhat lengthy calculations in which one calculates the associated characteristic polynomials, and shows that there are no $\theta$-independent roots, employing Proposition~\ref{PropositionFlatBand} (this should be compared to the proofs of Theorems~\ref{thm:FlatBandsKagome} and~\ref{thm:FlatBandsSuperKagome} below).
We omit them here for the sake of conciseness.
In any case, Proposition~\ref{prop:no_surprises} justifies to restrict our attention to the (perturbed) Kagome and Super-Kagome lattices from now on.

\section{The perturbed Kagome lattice}
    \label{sec:Kagome}
In this section we discuss the Kagome lattice with non-uniform (periodic) edge weights. The elementary cell of the Kagome lattice contains three vertices and six edges (one can think of the edges as arranged around a hexagon).
\begin{figure}[ht]
\begin{tikzpicture}[scale = 1.5]

\begin{scope}[xshift = 5cm]
% 1) Vertices

% three vertices of fundamental domain
   \draw[fill = black] (60:1) circle (2pt);
   \draw[fill = black] (120:1) circle (2pt);
   \draw[fill = black] (180:1) circle (2pt);
   
% triangle on the left
   \begin{scope}[xshift = 2cm]
    \draw[fill = white] (120:1) circle (2pt);
    \draw[fill = white] (180:1) circle (2pt);
   \end{scope}
% two points above and on the right
   \begin{scope}[xshift = -1cm , yshift = 2 * 0.86602540378 * 1 cm]
    \draw[fill = white] (0:1) circle (2pt);
    \draw[fill = white] (240:1) circle (2pt);
   \end{scope}
% lower triangle
   \draw[fill = white] (240:1) circle (2pt);
   \begin{scope}[xshift = -2cm]
    \draw[fill = white] (300:1) circle (2pt);
   \end{scope}

   \draw[fill = white] (300:1) circle (2pt);
   \draw[thick, dashed] (247:.94) -- (293:.94);
   \draw[thick, dashed] (307:.94) -- (-7:.94);
   
% 2) Edges
    \draw[thick] (7:.94) -- (53:.94);
    \draw[thick] (67:.94) -- (113:.94);
    \draw[thick] (127:.94) -- (173:.94);
    \draw[thick] (187:.94) -- (233:.94);

\begin{scope}[xshift = 1cm, yshift = 2 * 0.86602540378 * 1 cm]
    \draw[thick] (187:.94) -- (233:.94);
    \draw[thick] (247:.94) -- (293:.94);
\end{scope}
\begin{scope}[xshift = -1cm, yshift = 2 * 0.86602540378 * 1 cm]
    \draw[thick] (247:.94) -- (293:.94);
    \draw[thick] (307:.94) -- (353:.94);
\end{scope}
\begin{scope}[xshift = -2cm]
    \draw[thick] (7:.94) -- (53:.94);
    \draw[thick] (307:.94) -- (353:.94);
\end{scope}

   \draw (210:.65) node {\color{red}$\gamma_1$};
   \draw (150:.65) node {\color{red}$\gamma_2$};
   \draw (90:.65) node {\color{red}$\gamma_3$};
   \draw (30:.65) node {\color{red}$\gamma_4$};
   \draw (-30:.65) node {\color{red}$\gamma_5$};
   \draw (-90:.65) node {\color{red}$\gamma_6$};
   
   \begin{scope}[xshift = -2cm]
    \draw (30:.65) node {\color{red}$\gamma_4$};
    \draw (-30:.65) node {\color{red}$\gamma_5$};
   \end{scope}   
   
   \begin{scope}[xshift = -1cm]
    \draw (90:1) node {\color{red}$\gamma_6$};
   \end{scope}   

   \begin{scope}[xshift = -1cm , yshift = 2 * 0.86602540378 * 1 cm]
    \draw (-30:.65) node {\color{red}$\gamma_5$};
   \end{scope}
   
   \begin{scope}[xshift = 1cm , yshift = 2 * 0.86602540378 * 1 cm]
    \draw (210:.65) node {\color{red}$\gamma_1$};
   \end{scope}
   
   \begin{scope}[xshift = 1cm]
    \draw (90:1) node {\color{red}$\gamma_6$};
   \end{scope}
   
   % 3) nodes
% nodes
   \draw (60:1.2) node {$v_1$};
   \draw (120:1.2) node {$v_2$};
   \draw (180:1.2) node {$v_3$};

   \begin{scope}[xshift = 2cm]
    \draw (90:1) node {$v_2 + \omega_1$};
    \draw (180:.4) node {$v_3 + \omega_1$};
   \end{scope}
   \begin{scope}[xshift = -1cm , yshift = 2 * 0.86602540378 * 1 cm]
    \draw (10:1) node {$v_3 + \omega_2$};
   \end{scope}
   \draw (250:1.25) node {$v_1 - \omega_2$};
   \begin{scope}[xshift = -2cm]
    \draw (90:1) node {$v_1 - \omega_1$};
    \draw (270:1) node {$v_2 - \omega_2$};
   \end{scope}
   
\end{scope}

\end{tikzpicture}
 \caption{Fundamental domain of the Kagome lattice with edge weights.
 In the monomeric case, all edge weights around downwards pointing triangles are $\g_2 = \g_4 = \g_6 =: \alpha$ and all edge weights on upwards pointing triangles are $\g_1 = \g_3 = \g_5 =: \beta$, where $2 \alpha + 2 \beta = \mu$.}
 \label{fig:kagome}
\end{figure}
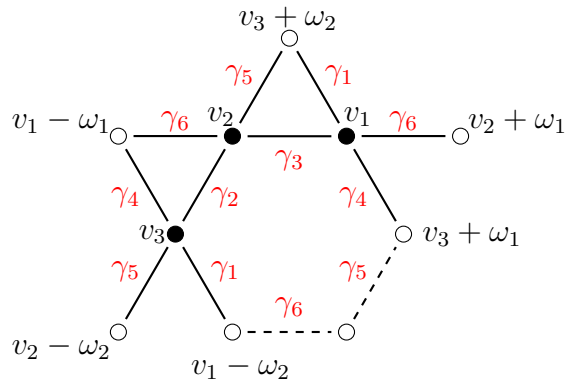
A priori, periodicity allows for six edge weights $\gamma_1,...,\gamma_6 > 0$, and the Floquet Laplacian $\Delta_\gamma^\theta$ can be written as the Hermitian matrix
\begin{equation}
    \label{MatrixHamKagome}
    \Delta^{\theta}_{\gamma}
    =
    \Id-\frac{1}{\mu}\begin{pmatrix} 0 & \gamma_3+w\gamma_6 & w\gamma_4+z\gamma_1 \\ \gamma_3+\overline{w}\gamma_6 & 0 & \gamma_2+z\gamma_5 \\ \overline{w}\gamma_4+\overline{z}\gamma_1 & \gamma_2+\overline{z}\gamma_5 & 0
\end{pmatrix}\ ,
\end{equation}
where $w:=e^{i\theta_1}$ and $z:=e^{i\theta_2}$. 
We denote the three real eigenvalues of $\Delta^{\theta}_{\gamma}$ by $\lambda_1(\theta,\gamma) \leq \lambda_2(\theta,\gamma) \leq  \lambda_3(\theta,\gamma)$.

Note that the six degrees of freedom are to be further reduced, depending on the following symmetry conditions:
\begin{itemize}
    \item
    If we merely assume a \emph{constant vertex weight} $\mu > 0$, then identity~\eqref{eq:vertex_and_edge_weights} will impose the three additional linearly independent conditions
\begin{equation}\label{VertexCond}\begin{split}
\gamma_1+\gamma_4&=\gamma_2+\gamma_5\ , \\
\gamma_3+\gamma_6&=\gamma_2+\gamma_5\ ,\\
\gamma_1+\gamma_3+\gamma_4+\gamma_6&= \mu\ ,
\end{split}
\end{equation}
and we end up with three degrees of freedom.
    \item
    If we also assume \emph{monomericity}, then it is easy to see that the only choice is the \emph{breathing Kagome lattice}, cf.~\cite{HerreraKPGSSB-22}, with an edge weight $\alpha > 0$ on all edges belonging to upwards pointing triangles and edge weight $\beta > 0$ on all edges belonging to downwards pointing triangles, where $2 (\alpha + \beta) = \mu$.
    After fixing the vertex weight $\mu$, this amounts to only one degree of freedom.
\end{itemize}
\subsection{Flat bands in the perturbed Kagome lattice}\label{SectionFlatKagome}
%
% The aim of this section is to establish the following result.
%
\begin{theorem}\label{thm:FlatBandsKagome} 
Consider the perturbed Kagome lattice with Laplacian~\eqref{eq:laplacian}, fixed vertex weight $\mu > 0$ and periodic edge weights $\gamma_1,...,\gamma_6 > 0$, satisfying the condition~\eqref{eq:vertex_and_edge_weights} on vertex and edge weights. 
Then, the following are equivalent:
\begin{enumerate}[(i)]
 \item 
 There exists a flat band.
 \item
 The vertex weights are monomeric. More explicitly, there are $\alpha, \beta > 0$ with $2 (\alpha + \beta) = \mu$ such that
 \begin{equation*}\begin{split}
 \gamma_{2}&=\gamma_{4}=\gamma_{6}:=\alpha, \\
 \gamma_{1}&=\gamma_{3}=\gamma_{5}:=\beta.
 \end{split}
 \end{equation*}
\end{enumerate}
%
%In particular, if either of the above holds 
\end{theorem}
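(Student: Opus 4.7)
The proof is an equivalence. Direction (ii) $\Rightarrow$ (i) is easy: I would exhibit an explicit compactly supported eigenfunction. Direction (i) $\Rightarrow$ (ii) is the nontrivial one; I would attack it by extracting polynomial equations from the Floquet characteristic polynomial and solving them using strict positivity of the edge weights.

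For (ii) $\Rightarrow$ (i), fix one hexagon of the Kagome lattice and label its six boundary vertices cyclically $v_1,\dots,v_6$. Define $f(v_j) := (-1)^j$ and $f \equiv 0$ elsewhere. Under monomericity, the two hexagonal neighbors of a vertex $v_j$ carry opposite $f$-values and are connected to $v_j$ by one $\alpha$-edge and one $\beta$-edge, while the two external neighbors of $v_j$ carry value $0$. A direct computation using $\mu = 2(\alpha+\beta)$ gives $(\Delta_\gamma f)(v_j) = \tfrac{3}{2}f(v_j)$. An external vertex $w$ adjacent to the chosen hexagon lies on a unique triangle sharing an edge with the hexagon; monomericity forces its two edges to the hexagon to carry equal weight, and since they go to two consecutive (hence oppositely signed) hexagonal vertices, the contributions cancel and $(\Delta_\gamma f)(w) = 0$. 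Thus $f$ is a nontrivial, compactly supported eigenfunction at $E = \tfrac{3}{2}$, and Proposition~\ref{PropositionFlatBand} yields a flat band.

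For (i) $\Rightarrow$ (ii), a flat band at energy $E$ means $\lambda := (1-E)\mu$ is an eigenvalue of $A(\theta) := \mu(\Id-\Delta_\gamma^\theta)$ for every $\theta \in \TT^2$. Writing the off-diagonal entries of $A(\theta)$ as $a := \gamma_3+w\gamma_6$, $b := w\gamma_4+z\gamma_1$, $c := \gamma_2+z\gamma_5$, the characteristic polynomial reads
\[
 p_\theta(\lambda) = \lambda^3 - (|a|^2+|b|^2+|c|^2)\lambda - 2\Rea(ac\overline{b}).
\]
Expanded in Fourier modes on $\TT^2$, only the four modes $1$, $\Rea(w)$, $\Rea(z)$, $\Rea(w\bar{z})$ occur. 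Since $\lambda$ is $\theta$-independent, each of the four Fourier coefficients of $p_\theta(\lambda)$ must vanish separately; the three non-constant modes give
\begin{align*}
 \lambda\gamma_3\gamma_6 + \gamma_1\gamma_5\gamma_6 + \gamma_2\gamma_3\gamma_4 &= 0, \\
 \lambda\gamma_2\gamma_5 + \gamma_1\gamma_2\gamma_3 + \gamma_4\gamma_5\gamma_6 &= 0, \\
 \lambda\gamma_1\gamma_4 + \gamma_1\gamma_2\gamma_6 + \gamma_3\gamma_4\gamma_5 &= 0.
\end{align*}
Using~\eqref{VertexCond}, I would parametrize $\gamma_1 = s/2+x$, $\gamma_4 = s/2-x$, and analogously $(\gamma_2,\gamma_5)\leftrightarrow y$ and $(\gamma_3,\gamma_6)\leftrightarrow u$, with $s := \mu/2$ and $|x|,|y|,|u| < s/2$ by positivity. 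Monomericity is equivalent to $x = u$ and $y = -x$.

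Taking pairwise differences of the three equations produces, after straightforward algebra, the factored identities
\begin{align*}
 (y+u)\bigl[\lambda(y-u)-2sx\bigr] &= 0, \\
 (x+y)\bigl[\lambda(x-y)+2su\bigr] &= 0, \\
 (x-u)\bigl[\lambda(x+u)-2sy\bigr] &= 0.
\end{align*}
Monomericity makes each first factor vanish. The remaining work, and the main obstacle, is the case analysis of the branches where an ``exotic'' second factor vanishes instead. In each such branch, resubstitution into the original cubic (say, the first one) isolates a factor $s^2/4 - x^2$ or similar which is strictly positive by $|x|<s/2$, forcing a linear relation that pushes one back into the monomeric branch. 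In the extremal subcase where all three exotic factors vanish simultaneously, elimination yields the identity $x^2 + xu + u^2 = 3s^2/4$, which is impossible under $|x|,|u|<s/2$ since the left-hand side attains its supremum $3s^2/4$ only at $|x|=|u|=s/2$ with equal signs. This rules out every non-monomeric branch and leaves only $x = u$, $y = -x$, i.e.\ $\gamma_1 = \gamma_3 = \gamma_5 =: \beta$ and $\gamma_2 = \gamma_4 = \gamma_6 =: \alpha$ with $2(\alpha+\beta) = \mu$.
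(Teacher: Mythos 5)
Your proposal is correct, and while it shares the paper's entry point, it completes the argument by a genuinely different route in both directions. For (i) $\Rightarrow$ (ii) you and the paper both expand $\det(\lambda \Id - \Pi_\gamma^\theta)$ in the modes $1,\cos\theta_1,\cos\theta_2,\cos(\theta_1-\theta_2)$ and force the three non-constant coefficients to vanish; your three displayed equations agree with~\eqref{CondI} (with $\lambda$ kept general). From there the paper rescales so that $\lambda=-1$ and, in Lemma~\ref{LemmaEW}, also invokes the constant-mode condition~\eqref{CondII} to pin down $\gamma_2+\gamma_5=1$ before extracting the equalities of weights by pairwise additions; you instead keep $\lambda$ free, use~\eqref{VertexCond} to write all six weights as $\tfrac{\mu}{4}\pm x,\pm y,\pm u$, and never use the constant-mode condition at all. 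I checked your factored difference identities and they are exact, and your case analysis does close: since any two of the ``monomeric'' factors vanishing forces the third, the only non-monomeric branches are (a) exactly one monomeric factor vanishes and two exotic ones hold, where substituting the surviving linear relation into the appropriate cubic (not always the first one --- for $y=-x$ you need the third) produces the strictly positive factor $\tfrac{s^2}{4}-u^2$ or $\tfrac{s^2}{4}-x^2$, hence $\lambda=-s$, after which the exotic relation collapses to the monomeric one, a contradiction; and (b) all three exotic factors vanish, where adding the first two gives $\lambda=2s$ and $y=x+u$, and substitution into a cubic yields exactly your identity $x^2+xu+u^2=\tfrac{3s^2}{4}$, impossible for $|x|,|u|<\tfrac{s}{2}$ (alternatively, $\lambda=2s>0$ already contradicts the negativity of $\lambda$ forced by each cubic). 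What your variant buys is that only three of the four coefficient conditions are needed, at the cost of a branch-by-branch analysis in place of the paper's shorter additive manipulations. For (ii) $\Rightarrow$ (i) your route is more elementary and more informative than the paper's: the paper verifies algebraically that monomeric weights satisfy all coefficient conditions, whereas your alternating $\pm 1$ function on a hexagon is a correct compactly supported eigenfunction (one $\alpha$- and one $\beta$-edge along the hexagon at each vertex, cancellation at the triangle apices), it immediately exhibits the flat-band energy $\tfrac32$ --- which the paper only identifies later in Theorem~\ref{thm:BandGapKagome} --- and, taking translates with disjoint supports, it feeds directly into Proposition~\ref{PropositionFlatBand}(iii).
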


The rest of this subsection is devoted to the proof of Theorem~\ref{thm:FlatBandsKagome}. 
We start with identifying flat bands using the weighted adjacency matrix
\begin{equation}\Pi^{\theta}_{\gamma}:= \begin{pmatrix} 0 & \gamma_3+w\gamma_6 & w\gamma_4+z\gamma_1 \\ \gamma_3+\overline{w}\gamma_6 & 0 & \gamma_2+z\gamma_5 \\ \overline{w}\gamma_4+\overline{z}\gamma_1 & \gamma_2+\overline{z}\gamma_5 & 0
\end{pmatrix}
\end{equation}
which is spectrally equivalent to $\Delta_\gamma^\theta$ up to scaling and shifting via the relation
\[
 \Delta_\gamma^\theta
 =
 \Id
 -
 \frac{1}{\mu}
 \Pi_\gamma^\theta.
\]
In order to find flat bands, we will identify conditions for $\theta$-independent eigenvalues of $\Pi_\gamma^\theta$ and therefore calculate
\begin{equation*}\begin{split}
\det(\lambda\Id-\Pi^{\theta}_{\gamma})&=-\lambda^3+\lambda(|A|^2+|B|^2+|C|^2)+2 \Re (A\overline{B}C)
\end{split}
\end{equation*}
where $A:=\gamma_3+w\gamma_6$, $B:=w\gamma_4+z\gamma_1$ and $C:=\gamma_2+z\gamma_5$. Rearranging the terms yields
\begin{equation*}\begin{split}
    \det(\lambda\Id-\Pi^{\theta}_{\gamma})=
    &(w + \overline w) (\lambda\gamma_6 \gamma_3+\gamma_3\gamma_2\gamma_4+\gamma_6\gamma_5\gamma_1) \\
    +
    &(z + \overline z) (\lambda \gamma_5 \gamma_2+\gamma_6\gamma_5\gamma_4+\gamma_1\gamma_3\gamma_2)\\
    +
    &(w \overline z + z\overline{w}) (\lambda \gamma_1\gamma_4+\gamma_3 \gamma_5\gamma_4+\gamma_6\gamma_2\gamma_1) \\
    +
    &(-\lambda^3+\lambda(\gamma^2_1+...+\gamma^2_6)+2(\gamma_4\gamma_6\gamma_2+\gamma_3\gamma_5\gamma_1))\ .
\end{split}
\end{equation*}
The prefactors
\[
    w + \overline{w} = 2 \cos \theta_1\ ,
    \quad
    z + \overline{z} = 2 \cos \theta_2\ ,
    \quad
    \text{and}
    \quad
    w \overline{z} + z \overline{w} = 2 \cos (\theta_1 - \theta_2)\ ,
\]
are linearly independent as measurable functions of $\theta$ on $\TT^2$. Consequently, since all $\gamma_i$ are positive, $\theta$-independent eigenvalues exist if and only if the $w$ and $z$-independent terms in every line are zero.
This is only possible for negative $\lambda$, which (possibly after scaling  the $\gamma_i$ and $\mu$ for the moment) can be assumed to equal $-1$. 
Therefore, we obtain the conditions
\begin{equation}\label{CondI}\begin{split}
\gamma_{3}\gamma_{6}&=\gamma_{2}\gamma_{3}\gamma_{4}+\gamma_{1}\gamma_{5}\gamma_{6}\ , \\
\gamma_{2}\gamma_{5}&=\gamma_{4}\gamma_{5}\gamma_{6}+\gamma_{1}\gamma_{2}\gamma_{3}\ , \\
\gamma_{1}\gamma_{4}&=\gamma_{3}\gamma_{4}\gamma_{5}+\gamma_{1}\gamma_{2}\gamma_{6}\ , 
\end{split}
\end{equation}
and
\begin{equation}\label{CondII}\begin{split}
1-(\gamma^2_{1}+\dots+\gamma^2_{6})+2\left(\gamma_{2}\gamma_{4}\gamma_{6}+\gamma_{1}\gamma_{3}\gamma_{5}\right)=0\ .
\end{split}
\end{equation}
\begin{lemma}\label{LemmaEW} The only positive solutions (meaning all $\gamma_i$ are non-zero) of \eqref{VertexCond}, \eqref{CondI}, \eqref{CondII} are
	\begin{equation}   
	\label{eq:only_positive_solutions} 
    \begin{split}
	\gamma_{2}&=\gamma_{4}=\gamma_{6} = x \\
		\gamma_{1}&=\gamma_{3}=\gamma_{5} = y
	\end{split}
	\end{equation}
	with $x,y \in (0,1)$ and $x+y=1$.
\end{lemma}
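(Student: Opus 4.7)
The plan is to first pin down the value of $s:=\gamma_1+\gamma_4=\gamma_2+\gamma_5=\gamma_3+\gamma_6$ using the eigenvalues of $\Pi_\gamma^\theta$ at $\theta=(0,0)$, then to recognize that each of the three conditions in~\eqref{CondI} factors into a product of two linear terms, and finally to verify that~\eqref{CondII} is automatic on the resulting one-parameter family.

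By~\eqref{VertexCond} the three sums agree and $s=\mu/2$. Evaluated at $\theta=(0,0)$, i.e.\ $w=z=1$, the matrix $\Pi_\gamma^\theta$ reduces to $s\,(J-\Id)$, where $J$ is the $3\times 3$ all-ones matrix; its spectrum is $\{-s,-s,2s\}$. Since the system~\eqref{CondI}--\eqref{CondII} encodes precisely that $-1$ is an eigenvalue of $\Pi_\gamma^\theta$ for every $\theta\in\TT^2$, in particular at $\theta=(0,0)$ we must have $-1\in\{-s,2s\}$, which together with $s>0$ forces $s=1$, hence $\mu=2$ and $\gamma_{j+3}=1-\gamma_j$ for $j\in\{1,2,3\}$.

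Next, I would substitute $\gamma_4=1-\gamma_1$, $\gamma_5=1-\gamma_2$, $\gamma_6=1-\gamma_3$ into the three equations of~\eqref{CondI}. A careful regrouping of the resulting cubic polynomials in $\gamma_1,\gamma_2,\gamma_3$ should produce the factorizations
\begin{align*}
(\gamma_3-\gamma_1)(1-\gamma_2-\gamma_3)&=0,\\
(1-\gamma_1-\gamma_2)(1-\gamma_2-\gamma_3)&=0,\\
(\gamma_1-\gamma_3)(1-\gamma_1-\gamma_2)&=0.
\end{align*}
Setting $A:=\gamma_1-\gamma_3$, $B:=1-\gamma_1-\gamma_2$, $C:=1-\gamma_2-\gamma_3$, these three conditions read $AC=BC=AB=0$, which forces at least two of $A,B,C$ to vanish. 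A quick inspection of the three possibilities shows that in each one $\gamma_1=\gamma_3$ together with $\gamma_1+\gamma_2=1$, and back-substituting yields $\gamma_1=\gamma_3=\gamma_5=:y$ and $\gamma_2=\gamma_4=\gamma_6=:x$ with $x+y=1$; strict positivity then forces $x,y\in(0,1)$.

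Finally, I would check that~\eqref{CondII} is automatic on this family. With $x+y=1$ one has $x^2+y^2=1-2xy$ and $x^3+y^3=1-3xy$, so~\eqref{CondII} reduces to $1-3(1-2xy)+2(1-3xy)=0$, which holds identically. The main obstacle is the factorization step itself: it is not evident from the raw form of~\eqref{CondI} that each equation admits such a clean product structure, and finding the correct regrouping takes some algebraic care. Once those factorizations are in hand, the case analysis and the final consistency check are short and elementary.
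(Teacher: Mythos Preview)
Your argument is correct. The spectral evaluation at $\theta=(0,0)$ is legitimate because, as derived in the text immediately preceding the lemma, conditions~\eqref{CondI} and~\eqref{CondII} are exactly the statement that $-1$ is a root of $\det(\lambda\Id-\Pi_\gamma^\theta)$ for every $\theta$; specialising to $\theta=0$ then forces $s=1$ as you say. I have checked your three factorizations after substituting $\gamma_{j+3}=1-\gamma_j$: they come out exactly as you wrote, so the step you flagged as the main obstacle does go through. The case analysis on $AB=BC=AC=0$ and the final verification of~\eqref{CondII} are also fine.

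Your route differs genuinely from the paper's. The paper does not use the spectral shortcut at $\theta=0$; instead it first bounds each $\gamma_i<1$ from~\eqref{CondI} alone, then sets $\Lambda:=\gamma_2+\gamma_5$, adds all of~\eqref{CondI} to half of~\eqref{CondII}, and after repeated factorization obtains the cubic $\Lambda^3-\tfrac{3}{2}\Lambda^2+\tfrac{1}{2}=0$, whose only positive root is $\Lambda=1$. With $\Lambda=1$ in hand it adds pairs of the equations in~\eqref{CondI} to peel off the identities $\gamma_3=\gamma_5$, $\gamma_2=\gamma_4$, and then invokes~\eqref{VertexCond}. Your approach trades that cubic for a one-line spectral observation and replaces the pairwise additions by clean bilinear factorizations; this is shorter and more conceptual, but it leans on the surrounding Floquet context, whereas the paper's argument is a self-contained manipulation of the algebraic system. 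Either way, both proofs establish $s=1$ first and only then extract the equalities among the $\gamma_i$; your factorization is the cleaner mechanism for that second step.
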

\begin{proof} 
By a direct calculation \eqref{eq:only_positive_solutions} solves~\eqref{VertexCond}, \eqref{CondI}, \eqref{CondII}.

Conversely, assume that there are positive solutions $\gamma_1,...,\gamma_6 > 0$. From~\eqref{CondI} we obtain
	\begin{equation*}\begin{split}
	\gamma_3=\frac{\gamma_{1}\gamma_{5}\gamma_{6}}{\gamma_6-\gamma_2\gamma_4}\ , \quad \gamma_1=\frac{\gamma_{3}\gamma_{4}\gamma_{5}}{\gamma_4-\gamma_2\gamma_6}\ , 
		\end{split}
	\end{equation*}
	and this implies $\gamma_6 > \gamma_2 \gamma_4$ and $\gamma_4 > \gamma_2\gamma_6$. Hence, combining both equations yields $\gamma^2_2\gamma_6 < \gamma_6$ which shows that $\gamma_2 < 1$. In the same way one proves $\gamma_i < 1$ for every other $i$.
	
Next, let $\gamma_2+\gamma_5:=\Lambda$. By \eqref{VertexCond} one immediately concludes $\gamma_1+\gamma_4=\gamma_3+\gamma_6=\Lambda$. Now, we add \eqref{CondI} and \eqref{CondII} and rearrange the equations to obtain
\begin{equation*}\begin{split}
\frac{1}{2}\left(\gamma^2_{1}+\dots+\gamma^2_{6}-1\right)+\gamma_{3}\gamma_{6}+\gamma_{2}\gamma_{5}+\gamma_{1}\gamma_{4}=&\gamma_{2}\gamma_{4}\gamma_{6}+\gamma_{1}\gamma_{3}\gamma_{5}\\ &+\gamma_{2}\gamma_{3}\gamma_{4}+\gamma_{1}\gamma_{5}\gamma_{6}+\gamma_{4}\gamma_{5}\gamma_{6}\\
&+\gamma_{1}\gamma_{2}\gamma_{3}+\gamma_{3}\gamma_{4}\gamma_{5}+\gamma_{1}\gamma_{2}\gamma_{6}\ .
\end{split}
\end{equation*}
By repeated factorization, the right hand side simplifies to

\begin{equation}
  \gamma_{1}\gamma_{3}(\gamma_2+\gamma_5)
  +
  \gamma_3\gamma_4(\gamma_2+\gamma_5)
  +
  \gamma_1\gamma_6(\gamma_2+\gamma_5)
  +
  \gamma_4\gamma_6(\gamma_2+\gamma_5)
  =
 \Lambda^3,
\end{equation}
and since for the left hand side one has
\[
 \frac{1}{2}\left(\gamma^2_{1}+\dots+\gamma^2_{6}-1\right)+\gamma_{3}\gamma_{6}+\gamma_{2}\gamma_{5}+\gamma_{1}\gamma_{4}=\frac{3\Lambda^2-1}{2}\ ,
 \]
 we arrive at the polynomial $\Lambda^3-\frac{3\Lambda^2}{2}+\frac{1}{2}=0$
the only positive solution of which is $\Lambda=1$. Finally, adding the first the two equations of \eqref{CondI} yields 
\begin{equation*}\begin{split}
\gamma_3 \gamma_6+\gamma_2 \gamma_5= (\gamma_6\gamma_5+\gamma_2\gamma_3)(\gamma_1+\gamma_4)=\gamma_6\gamma_5+\gamma_2\gamma_3 
\end{split}
\end{equation*}
and this implies $\gamma_5=\gamma_3$. Furthermore, adding the last two equations gives
\begin{equation*}\begin{split}
\gamma_2 \gamma_5+\gamma_1 \gamma_4= (\gamma_4\gamma_5+\gamma_1\gamma_2)(\gamma_3+\gamma_6) =\gamma_4\gamma_5+\gamma_1\gamma_2
\end{split}
\end{equation*}
giving $\gamma_4=\gamma_2$. Conditions~\eqref{VertexCond} hence give $\gamma_1=\gamma_5$ and $\gamma_6=\gamma_2$. This proves the statement.
\end{proof}
We are now in the position to prove Theorem~\ref{thm:FlatBandsKagome}.
\begin{proof}[Proof of Theorem~\ref{thm:FlatBandsKagome}] Comparing $\Pi^{\theta}_\gamma$ with $\Delta^{\theta}_{\gamma}$ we conclude that $\Delta^{\theta}_\gamma$ has a flat band with edge weights $\g_1,...,\g_6$ if and only if there exists $\delta > 0$ such that $\Pi^{\theta}_{\gamma}$ has a flat band for edge weights $\delta \g_1,...,\delta\g_6$. From this observation the statement follows directly taking Lemma~\ref{LemmaEW} into account.

	%Now, reversing the scaling of the $\gamma_i$ and $\mu$, in which the $\theta$-independent eigenvalue of $\Pi_\gamma^\theta$ was set to be $-1$, Lemma~\ref{LemmaEW} completes the proof of Theorem~\ref{thm:FlatBandsKagome}.
\end{proof}
\subsection{The spectrum and band gaps in the monomeric Kagome lattice}
In the case where the perturbed Kagome lattice has a flat band, we further study the structure of the rest of the spectrum. We reiterate that, due to Theorem~\ref{thm:FlatBandsKagome}, the existence of a flat band is equivalent to the weights being monomeric. 

As shown for instance in \cite{TaeuferPeyerimhoff}, in the case where all edge weights are equal, the two other spectral bands, generated by the two other $\theta$-dependent eigenvalues of $\Delta^{\theta}_{\gamma}$, touch at $E = 3/4$, and the derivative of the integrated density of states at $E = 3/4$ vanishes 
-- an indication that the spectral density at $3/4$ is sufficiently thin for a gap to form under perturbation. And indeed, this is the statement of the next theorem, which also characterises the width of the gap.
\begin{theorem}[Band gaps in the perturbed Kagome lattice]\label{thm:BandGapKagome} 
Consider the perturbed Kago\-me lattice with fixed vertex weight $\mu > 0$, and monomeric edge weights $\alpha, \beta > 0$, satisfying $2 (\alpha + \beta) = \mu$ as characterized in Theorem~\ref{thm:FlatBandsKagome}. 
Then, the spectrum is given by
\[
 I_1
 \cup
 I_2
 :=
 \left[
    0, \frac{3}{4} - \left|\frac{3\alpha}{\mu}-\frac{3}{4}\right|
 \right]
 \bigcup
 \left[
    \frac{3}{4} + \left|\frac{3\alpha}{\mu}-\frac{3}{4}\right|, \frac{3}{2}
 \right].
\]
Furthermore, there is always a flat band at $\frac{3}{2}$.
\end{theorem}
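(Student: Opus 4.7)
The plan is to fully diagonalize the Floquet operator $\Delta_\gamma^\theta = \Id - \frac{1}{\mu}\Pi_\gamma^\theta$ pointwise in $\theta$ and then let $\theta$ range over $\TT^2$. After substituting the monomeric weights $\gamma_2 = \gamma_4 = \gamma_6 = \alpha$ and $\gamma_1 = \gamma_3 = \gamma_5 = \beta$ into the characteristic polynomial of $\Pi_\gamma^\theta$ computed in Section~\ref{SectionFlatKagome}, Lemma~\ref{LemmaEW} (applied after rescaling to $\alpha + \beta = 1$ and rescaling back) gives one $\theta$-independent root $\lambda_0 = -(\alpha+\beta) = -\mu/2$. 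Under the affine map $\lambda \mapsto 1 - \lambda/\mu$, this root is precisely the flat band at $E = 3/2$.

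Dividing the characteristic polynomial by $\lambda - \lambda_0$, the remaining two dispersive eigenvalues of $\Pi_\gamma^\theta$ are roots of a quadratic of the form $\lambda^2 - (\alpha+\beta)\lambda + q(\theta) = 0$, with discriminant $D(\theta) := (\alpha+\beta)^2 - 4q(\theta)$. Using
$$|A|^2 + |B|^2 + |C|^2 = 3(\alpha^2 + \beta^2) + 2\alpha\beta\bigl(\cos\theta_1 + \cos\theta_2 + \cos(\theta_1 - \theta_2)\bigr)$$
together with the identity $|1 + e^{i\theta_1} + e^{i\theta_2}|^2 = 3 + 2(\cos\theta_1 + \cos\theta_2 + \cos(\theta_1-\theta_2))$, a short rearrangement produces the key identity
$$D(\theta) = 9(\alpha - \beta)^2 + 4\alpha\beta\,\bigl\lvert 1 + e^{i\theta_1} + e^{i\theta_2}\bigr\rvert^2.$$
This expresses $D$ as a sum of two non-negative terms; as $\theta$ varies continuously over $\TT^2$, the factor $|1+e^{i\theta_1}+e^{i\theta_2}|^2$ sweeps surjectively from $0$ (at $\theta_1 = -\theta_2 = 2\pi/3$) to $9$ (at $\theta_1 = \theta_2 = 0$), so $\sqrt{D(\theta)}$ continuously sweeps $[3|\alpha-\beta|, 3(\alpha+\beta)]$.

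Putting everything together, the two dispersive branches of $\Delta_\gamma^\theta$ are the continuous functions $E_\pm(\theta) = \frac{3}{4} \mp \frac{\sqrt{D(\theta)}}{2\mu}$, whose ranges are exactly $\bigl[0,\,\tfrac{3}{4} - \tfrac{3|\alpha-\beta|}{2\mu}\bigr]$ and $\bigl[\tfrac{3}{4} + \tfrac{3|\alpha-\beta|}{2\mu},\,\tfrac{3}{2}\bigr]$. Finally, $\mu = 2(\alpha+\beta)$ yields $\frac{3|\alpha-\beta|}{2\mu} = \bigl|\frac{3\alpha}{\mu} - \frac{3}{4}\bigr|$, rewriting the two intervals as $I_1$ and $I_2$ in the statement.

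The only step requiring any foresight is recognizing the discriminant in the clean sum-of-squares form $9(\alpha-\beta)^2 + 4\alpha\beta|1+e^{i\theta_1}+e^{i\theta_2}|^2$, which makes both the positivity of $D$ and the identification of its range immediate; everything else is linear algebra, a continuity/intermediate-value argument, and bookkeeping with the normalization $\mu = 2(\alpha+\beta)$.
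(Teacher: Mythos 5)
Your proposal is correct and follows essentially the same route as the paper: explicitly diagonalizing the $3\times 3$ Floquet matrix (with the flat band factored out), and then using that $\cos\theta_1+\cos\theta_2+\cos(\theta_1-\theta_2)$ — equivalently $\lvert 1+e^{i\theta_1}+e^{i\theta_2}\rvert^2$ — sweeps its full range $[-3/2,3]$ (resp.\ $[0,9]$) to identify the two dispersive bands. Your sum-of-squares form of the discriminant, $9(\alpha-\beta)^2+4\alpha\beta\lvert 1+e^{i\theta_1}+e^{i\theta_2}\rvert^2$, is just a tidier rewriting of the expression under the square root in the paper's eigenvalue formula, so the two arguments coincide in substance.
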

\begin{remark}
 Theorem~\ref{thm:BandGapKagome} states that, as soon as $\alpha \neq \beta$, or alternatively, $\alpha \neq \frac{\mu}{4}$, a spectral gap of width
 \[
  \left|\frac{6\alpha}{\mu}-\frac{3}{2}\right|
  =
  \frac{3}{\mu} \lvert \alpha - \beta \rvert 
 \]
 will form around $\frac{3}{4}$, see also Figure~\ref{fig:Kagome}.
 The flat band at $\frac{3}{2}$ will always be connected to the energy band below it which means that the ``touching'' of the flat band at $\frac{3}{2}$ is protected in the class of monomeric perturbations.
\end{remark}

	\begin{figure}[ht]
    \begin{tikzpicture}[xscale = 4, yscale = 8]

        % Band I_1 for 3 \alpha < 2 \beta
        \fill[red!20] (0,0) -- (.75,.25) -- (0,.5) -- (0,0);
        
        %Band I_2 for 3 \alpha < 2 \beta
        \fill [red!20] (1.5,0) -- (.75,.25) -- (1.5, .5) -- (1.5,0);

        %Flat band
        \draw[red, very thick, dashed] (1.5,0) -- (1.5,.5);
%         \draw[red, very thick] (0,0) -- (1.5,.5);
        
        % Names of the bands
        \draw (.375,.25) node {$I_1$};
        \draw (1.125,.25) node {$I_2$};

        %Grid
        \draw[thick, ->] (0,0) -- (1.55,0);
        \draw[thick] (0,0) -- (0,.5);
        
        \draw (-.2,.5) node {$\alpha = \frac{\mu}{2}$};
        \draw[thick] (-.05,.5) -- (.05,.5);       
        \draw (-.2,.0) node {$\alpha = 0$};
        \draw (-.2,.25) node {$\alpha = \frac{\mu}{4}$};
        
        \draw (1.5,-.05) node {$\frac{3}{2}$};
        \draw (.75,-.05) node {$\frac{3}{4}$};

        %Symbols
        \begin{scope}[xshift = 1.7cm]
         \draw[very thick, red, dashed] (0,0) -- (.25,0);
         \draw[anchor = west] (.25,0) node {Flat band};

         \fill[red!20] (0,.1) rectangle (.25,.2);
         \draw[anchor = west] (.25,.15) node {$\sigma(\Delta_\gamma)$};

        \end{scope}
    \end{tikzpicture}

    \caption{Spectrum of the monomeric $(3^2.6^2)$ Kagome lattice with vertex weight $\mu > 0$ as a function of the parameter $\alpha \in (0, \frac{\mu}{2})$, describing the edge weights on edges adjacent to downwards pointing triangles.}
    \label{fig:Kagome}
	\end{figure}
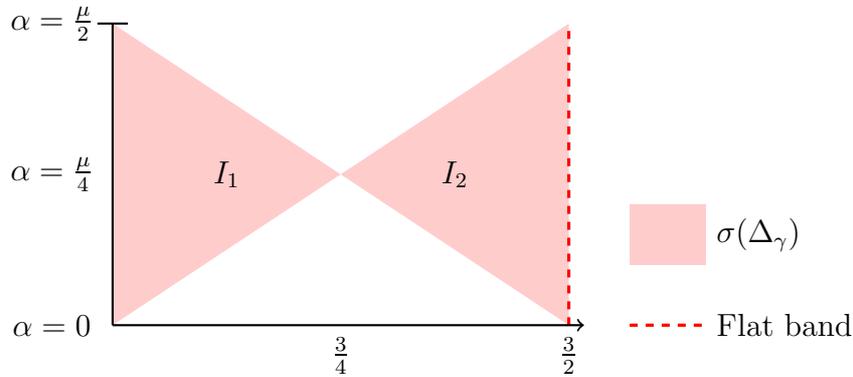

\begin{proof} A calculation shows that the eigenvalues of $\Delta_\gamma^\theta$ with the choice $2( \alpha + \beta)=\mu$ as in Theorem~\ref{thm:FlatBandsKagome} are given by
	\begin{equation*}
	\lambda_{1,2}(\theta,\gamma)=\frac{3}{4}\pm \frac{1}{4}\sqrt{1+8\left(1+(F(\theta)-3)\left(\frac{2\alpha}{\mu}-\frac{4\alpha^2}{\mu^2}\right)\right)}
	\end{equation*}
	and 
	\begin{equation*}
	\lambda_{3}(\theta,\gamma)=\frac{3}{2}
	\end{equation*}
where $F(\theta):=\cos(\theta_1) + \cos(\theta_2) + \cos(\theta_1 - \theta_2)$.
The function $\TT^2 \ni \theta \mapsto F(\theta)$ takes all values in $[-3/2, 3]$, see Lemma~3.1 in~\cite{TaeuferPeyerimhoff}, whence $\lambda_1(\theta, \gamma)$ and $\lambda_2(\theta, \gamma)$ take all values in the intervals
\[
 \left[
    0, \frac{3}{4} -  \left|\frac{3\alpha}{\mu}-\frac{3}{4}\right|
 \right]
 ,
 \quad
 \text{and}
 \quad
 \left[
    \frac{3}{4} + \left|\frac{3\alpha}{\mu}-\frac{3}{4}\right|, \frac{3}{2}
 \right],
 \quad
 \text{respectively}.
 \qedhere
\]
\end{proof}

\section{The perturbed Super-Kagome lattice}
    \label{sec:Super_Kagome}
In this section, we investigate the Archimedean tiling $(3.12^2)$ which we call Super-Kagome lattice.
Its minimal elementary cell contains six vertices and nine edges: three edges on upwards pointing triangles, three edges on downwards pointing triangles, and three edges bordering two dodecagons, see Figure~\ref{fig:super_kagome}.

\begin{figure}[ht]
 \begin{tikzpicture}
  % Center black points
  \draw[fill = black] (0,0) circle (2pt);
  \draw (.4,0) node {$v_4$};
  \draw[fill = black] (0,1) circle (2pt);
  \draw (.4,1) node {$v_3$};
  % lower black points
  \draw[fill = black] (-60:1) circle (2pt);
  \draw (-75:1.1) node {$v_5$};
  \draw[fill = black] (-120:1) circle (2pt);
  \draw (-105:1.15) node {$v_6$};
  
    % upper black points
  \begin{scope}[yshift = 1cm]
  \draw[fill = black] (60:1) circle (2pt);  
  \draw (75:1.1) node {$v_2$};
  \draw[fill = black] (120:1) circle (2pt);   
  \draw (105:1.1) node {$v_1$};
  \end{scope}
  % Connecting lines between black points
  \draw[thick] (0,.2) -- (0,.8);
  \draw[thick] (-60:.8)   -- (-60:.2);
  \draw[thick] (-120:.8)   -- (-120:.2);
  \draw[thick] (-70:.91) -- (-110:.91);
  \begin{scope}[yshift = 1cm]
  \draw[thick] (60:.8)   -- (60:.2);
  \draw[thick] (120:.8)   -- (120:.2);
  \draw[thick] (70:.91) -- (110:.91);
  \end{scope}
  
  % lower white points and corresponding connecting lines
  \begin{scope}[xshift = .5 cm, yshift = -0.86602540378 * 1 cm]
   \draw [fill = white] (-30:1) circle (2pt);
   \draw (-40:1.2) node {$v_1 - \omega_2$};
   \draw [thick] (-30:.2) -- (-30:.8);
  \end{scope}
  \begin{scope}[xshift = -.5 cm, yshift = -0.86602540378 * 1 cm]
   \draw [fill = white] (-150:1) circle (2pt);
   \draw (-140:1.2) node {$v_2 - \omega_1$};
   \draw [thick] (-150:.2) -- (-150:.8);
  \end{scope}
  %upper white points
  \begin{scope}[xshift = .5 cm, yshift = 1.86602540378 * 1 cm]
   \draw [fill = white] (30:1) circle (2pt);
   \draw (40:1.2) node {$v_6 + \omega_1$};
   \draw [thick] (30:.2) -- (30:.8);
  \end{scope}
  \begin{scope}[xshift = -.5 cm, yshift = 1.86602540378 * 1 cm]
   \draw [fill = white] (150:1) circle (2pt);
   \draw [thick] (150:.2) -- (150:.8);
   \draw (140:1.2) node {$v_5 + \omega_2$};
  \end{scope}
  
  %%
  %% Version with edges
  %%
  
  \begin{scope}[xshift = 5cm]
   % Center black points
  \draw[fill = black] (0,0) circle (2pt);
%   \draw (.4,0) node {$v_4$};
  \draw[fill = black] (0,1) circle (2pt);
%   \draw (.4,1) node {$v_3$};
  % lower black points
  \draw[fill = black] (-60:1) circle (2pt);
%   \draw (-75:1.1) node {$v_5$};
  \draw[fill = black] (-120:1) circle (2pt);
%   \draw (-105:1.15) node {$v_6$};
  \draw (.25,.5) node {\color{red}$\gamma_7$};
  \draw (-40:.7) node {\color{red}$\gamma_3$};
  \draw (220:.7) node {\color{red}$\gamma_2$};
  \draw (0,-1.1) node {\color{red}$\gamma_1$};
    % upper black points
  \begin{scope}[yshift = 1cm]
  \draw[fill = black] (60:1) circle (2pt);  
  \draw (40:.7) node {\color{red}$\gamma_5$};
  \draw[fill = black] (120:1) circle (2pt);   
  \draw (140:.7) node {\color{red}$\gamma_6$};
  \draw (0,1.1) node {\color{red}$\gamma_4$};
  \end{scope}
  % Connecting lines between black points
  \draw[thick] (0,.2) -- (0,.8);
  \draw[thick] (-60:.8)   -- (-60:.2);
  \draw[thick] (-120:.8)   -- (-120:.2);
  \draw[thick] (-70:.91) -- (-110:.91);
  \begin{scope}[yshift = 1cm]
  \draw[thick] (60:.8)   -- (60:.2);
  \draw[thick] (120:.8)   -- (120:.2);
  \draw[thick] (70:.91) -- (110:.91);
  \end{scope}
  
  % lower white points and corresponding connecting lines
  \begin{scope}[xshift = .5 cm, yshift = -0.86602540378 * 1 cm]
   \draw [fill = white] (-30:1) circle (2pt);
   \draw (-10:.6) node {\color{red}$\gamma_9$};
   \draw [thick] (-30:.2) -- (-30:.8);
  \end{scope}
  \begin{scope}[xshift = -.5 cm, yshift = -0.86602540378 * 1 cm]
   \draw [fill = white] (-150:1) circle (2pt);
   \draw (-170:.6) node {\color{red}$\gamma_8$};
   \draw [thick] (-150:.2) -- (-150:.8);
  \end{scope}
  %upper white points
  \begin{scope}[xshift = .5 cm, yshift = 1.86602540378 * 1 cm]
   \draw [fill = white] (30:1) circle (2pt);
   \draw (5:.6) node {\color{red}$\gamma_8$};
   \draw [thick] (30:.2) -- (30:.8);
  \end{scope}
  \begin{scope}[xshift = -.5 cm, yshift = 1.86602540378 * 1 cm]
   \draw [fill = white] (150:1) circle (2pt);
   \draw [thick] (150:.2) -- (150:.8);
   \draw (170:.6) node {\color{red}$\gamma_9$};
  \end{scope}
   
  \end{scope}

\end{tikzpicture}

\caption{Fundamental domain of the $(3.12^2)$ tiling with edge weights.
In the monomeric case, all edge weights around triangles triangles are $\g_1 = \dots = \g_6 =: \alpha$ and the remaining weights are $\g_7 = \g_8 = \g_9 =: \beta$.
}
\label{fig:super_kagome}
\end{figure}
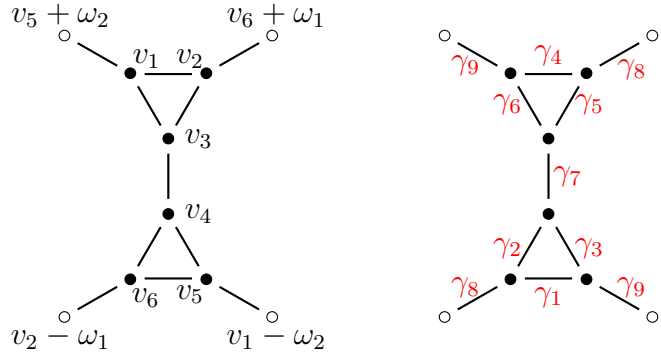
Given a constant vertex weight $\mu > 0$, the Floquet Laplacian~\eqref{eq:Laplacian_2} is a $6\times6$-matrix given by 
\begin{equation}\label{PerHamSK}
\Delta^{\theta}_{\gamma}=\Id
    - 
    \frac{1}{\mu}
    \pmat{
	0&\g_4&\g_6&0&z\g_9&0 \\ 
	\g_4&0&\g_5&0&0&w\g_8 \\ 
	\g_6&\g_5&0&\g_7&0&0 \\ 
	0&0&\g_7&0&\g_3&\g_2\\ 
	\overline{z}\g_9&0&0&\g_3&0&\g_1\\
	0&\overline{w}\g_8&0&\g_2&\g_1&0   
	},
\end{equation}
where $w:=e^{i\theta_1}$, $z:=e^{i\theta_2}$. 

\begin{itemize}
 \item 
If we fix a constant vertex weight $\mu > 0$, the condition $\sum_{w \sim v} \gamma_{vw} = \mu$ for all $v \in V$ leads to
\begin{equation}
    \label{eq:vertex_conditions_Super_Kagome}\begin{split}
    \mu 
    =
    \g_2+\g_3+\g_7
    =
    \g_5+\g_6+\g_7
    &=
    \g_1+\g_2+\g_8
    =
    \g_4+\g_5+\g_8 \\
    &=
    \g_1+\g_3+\g_9
    =
    \g_4+\g_6+\g_9.
    \end{split}
\end{equation}
This can be seen to be a linear system of $6$ linearly independent equations with $9$ unknowns, so the solution space is $3$-dimensional.
More precisely, by appropriate additions, we infer the three identities
\begin{equation} \label{ei}\begin{split}
    2 \g_1 + \g_8 + \g_9 &= 2 \g_7 + \g_2 + \g_3, \\
    2 \g_4 + \g_8 + \g_8 &= 2 \g_7 + \g_5 + d\g_6, \\
    \g_2 + \g_3 &= \g_5 + \g_6
\end{split}
\end{equation}
which imply $\gamma_1 = \gamma_4$.
The identities $\gamma_2 = \gamma_5$, and $\gamma_3 = \gamma_6$ follow by completely analogous calculations.
This leaves us with $6$ independent variables $\gamma_1, \gamma_2, \gamma_3$, and $\gamma_7, \gamma_8, \gamma_9$ which are however still subject to the three conditions
\[
  \g_2 + \g_3 + \g_7
  =
  \g_1 + \g_2 + \g_8
  =
  \g_1 + \g_3 + \g_9
  = 
  \mu
\]
from~\eqref{eq:vertex_conditions_Super_Kagome}.
Therefore, we are left with \emph{three} degrees of freedom.
    \item
If we additionally prescribe \emph{monomericity}, it is easy to see that there is only \emph{one} degree of freedom:
All edges around triangles carry the weight $\alpha > 0$, and all remaining edges (separating two dodecagons) carry the weight $\beta > 0$ under the condition $2 \alpha + \beta = \mu$.
\end{itemize}
\subsection{Flat bands in the perturbed Super-Kagome lattice}  
\begin{theorem}\label{thm:FlatBandsSuperKagome}
 Consider the perturbed Super-Kagome lattice with Laplacian~\eqref{eq:laplacian}, fixed vertex weight $\mu > 0$, and periodic edge weights $\gamma_1, \dots, \gamma_9 > 0$ satisfying the condition~\eqref{eq:vertex_and_edge_weights} on vertex and edge weights. Then, the following are equivalent:
 \begin{enumerate}[(i)]
  \item 
  There exist exactly two flat bands.
  \item
  The Super-Kagome lattice is monomeric. More explicitly, there are $\alpha,\beta > 0$ such that $ 2\alpha + \beta = \mu$ together with 
  \begin{equation*}\begin{split}
 \g_1=\g_2=\g_3=\g_4=\g_5=\g_6&=\alpha \ , \\
 \g_7=\g_8=\g_9&=\beta\ .
  \end{split}
 \end{equation*}
 \end{enumerate}
\end{theorem}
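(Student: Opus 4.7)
The approach parallels the proof of Theorem~\ref{thm:FlatBandsKagome}. As before, I pass to the weighted adjacency matrix $\Pi_\gamma^\theta := \mu(\Id - \Delta_\gamma^\theta)$, which is spectrally equivalent up to an affine transformation, so that a flat band of $\Delta_\gamma^\theta$ at energy $E$ corresponds to a $\theta$-independent eigenvalue of $\Pi_\gamma^\theta$ at $\mu(1-E)$. The preamble to the theorem already establishes that the constant-vertex-weight assumption forces $\gamma_1 = \gamma_4$, $\gamma_2 = \gamma_5$, $\gamma_3 = \gamma_6$ and leaves the three linear relations $\gamma_2 + \gamma_3 + \gamma_7 = \gamma_1 + \gamma_2 + \gamma_8 = \gamma_1 + \gamma_3 + \gamma_9 = \mu$, so the effective parameter space is three-dimensional. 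Within this space I need to show that \emph{exactly two} flat bands occur only on the one-dimensional monomeric locus $\gamma_1 = \gamma_2 = \gamma_3 =: \alpha$, $\gamma_7 = \gamma_8 = \gamma_9 =: \beta$ with $2\alpha + \beta = \mu$.

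The converse direction (monomeric $\Rightarrow$ two flat bands) is handled by direct computation: substituting the monomeric weights into~\eqref{PerHamSK} yields a characteristic polynomial which factors explicitly, exhibiting two linear factors independent of $\theta$ (corresponding to flat bands at energies expressible in terms of $\alpha/\mu$) together with a $\theta$-dependent quartic factor. The two flat-band eigenvectors can in fact be exhibited concretely as compactly supported functions with alternating signs, whose existence is guaranteed by Proposition~\ref{PropositionFlatBand}.

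For the main direction, I expand the characteristic polynomial
\[
 P(\lambda, \theta) := \det(\lambda \Id - \Pi_\gamma^\theta) = \sum_k a_k(\lambda)\, f_k(\theta),
\]
in a basis $(f_k)_k$ of linearly independent Floquet functions on $\TT^2$. Since only $\gamma_8$ and $\gamma_9$ carry Floquet phases in~\eqref{PerHamSK}, the non-constant $f_k$ are products of the elementary functions $\cos\theta_1$, $\cos\theta_2$, $\cos(\theta_1-\theta_2)$ and their higher moments. By linear independence of the $f_k$, the existence of two $\theta$-independent eigenvalues $E_1, E_2$ of $\Pi_\gamma^\theta$ is equivalent to $a_k(E_1) = a_k(E_2) = 0$ for every non-constant $f_k$, i.e.\ to $(\lambda - E_1)(\lambda - E_2)$ dividing each such $a_k(\lambda)$. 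This yields a system of polynomial equations in $\gamma_1, \gamma_2, \gamma_3, \gamma_7, \gamma_8, \gamma_9$ and $E_1, E_2$, which is then to be combined with the three linear vertex-weight constraints. Via a sequence of eliminations and symmetric manipulations analogous to Lemma~\ref{LemmaEW}, I would show that within the positive cone this system admits only monomeric solutions.

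The main obstacle is the algebraic bulk of the $6 \times 6$ determinant and the resulting polynomial system, which is substantially larger than in the Kagome case. However, the strong linear vertex-weight constraints cut the effective dimension to three, and the $S_3$-type symmetry of the lattice (cyclic permutation of the three triangle--dodecagon directions) enters the coefficients of $\cos\theta_1$, $\cos\theta_2$, $\cos(\theta_1-\theta_2)$ symmetrically, which should force the equalities $\gamma_1 = \gamma_2 = \gamma_3$ and $\gamma_7 = \gamma_8 = \gamma_9$ simultaneously rather than one at a time. A secondary subtlety is to rule out configurations admitting three or more flat bands in order to honour the ``exactly two'' wording; this amounts to verifying that the remaining quartic factor in the monomeric case is genuinely $\theta$-dependent, which is a quick direct check.
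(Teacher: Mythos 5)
Your overall strategy coincides with the paper's: pass to $\Pi_\gamma^\theta$, expand $\det(\lambda\Id-\Pi_\gamma^\theta)$ in the linearly independent functions $1$, $\cos\theta_1$, $\cos\theta_2$, $\cos(\theta_1-\theta_2)$, and demand that the $\theta$-dependent coefficients vanish at both flat-band energies. However, the decisive step of the forward direction is missing: you stop at ``via a sequence of eliminations and symmetric manipulations analogous to Lemma~\ref{LemmaEW}, I would show that within the positive cone this system admits only monomeric solutions,'' and you lean on the heuristic that the cyclic symmetry of the lattice ``should force'' $\gamma_1=\gamma_2=\gamma_3$ and $\gamma_7=\gamma_8=\gamma_9$ simultaneously. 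That heuristic cannot close the argument: invariance of the \emph{solution set} under the rotations does not make individual solutions symmetric, and indeed Theorem~\ref{thm:one_flat_band_SK} exhibits non-monomeric weights for which all three $\theta$-dependent coefficients vanish at a single common $\lambda$. So vanishing of these coefficients at \emph{one} energy does not imply monomericity, and any correct argument must exploit the requirement of \emph{two} common roots in an essential way; your sketch never does this.

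The paper's route through this step is short and worth contrasting with the lengthy elimination you anticipate. Each of the three $\theta$-dependent coefficients is a quadratic in $\lambda$ which factors explicitly, with root pairs $-\tfrac{\gamma_1\gamma_3}{\gamma_2}\pm\gamma_9$, $-\tfrac{\gamma_1\gamma_2}{\gamma_3}\pm\gamma_8$, $-\tfrac{\gamma_2\gamma_3}{\gamma_1}\pm\gamma_7$. Two $\theta$-independent eigenvalues force the three root \emph{sets} to coincide; positivity of the $\gamma_i$ then forces the ``$+$'' roots to agree among themselves and the ``$-$'' roots likewise (conditions~\eqref{HHH}--\eqref{HHH2}), whence $\gamma_7=\gamma_8=\gamma_9$ and $\tfrac{\gamma_1\gamma_3}{\gamma_2}=\tfrac{\gamma_1\gamma_2}{\gamma_3}=\tfrac{\gamma_2\gamma_3}{\gamma_1}$, i.e.\ $\gamma_1=\gamma_2=\gamma_3$ --- no analogue of the Kagome elimination in Lemma~\ref{LemmaEW} is needed. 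Finally, note that the $\theta$-independent (degree-six in $\lambda$) coefficient~\eqref{KugelbedingungSK} must also vanish at the two candidate energies $-\beta\pm\alpha$ before one may conclude the flat bands actually exist; your converse-direction factorisation would cover this, but it should be stated as part of the equivalence rather than left implicit.
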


\begin{proof} Recall that in the constant vertex weight case, we have
\[ \g_1=\g_4\ , \quad \g_2=\g_5\ , \quad  \text{and} \quad \g_3=\g_6\ ,\] 
and consider the weighted adjacency matrix
\begin{equation}\Pi^{\theta}_{\gamma}:=\pmat{
	0&\g_4&\g_6&0&z\g_9&0 \\ 
	\g_4&0&\g_5&0&0&w\g_8 \\ 
	\g_6&\g_5&0&\g_7&0&0 \\ 
	0&0&\g_7&0&\g_3&\g_2\\ 
	\overline{z}\g_9&0&0&\g_3&0&\g_1\\
	0&\overline{w}\g_8&0&\g_2&\g_1&0}
=\pmat{
	0&\g_1&\g_3&0&z\g_9&0 \\ 
	\g_1&0&\g_2&0&0&w\g_8 \\ 
	\g_3&\g_2&0&\g_7&0&0 \\ 
	0&0&\g_7&0&\g_3&\g_2\\ 
	\overline{z}\g_9&0&0&\g_3&0&\g_1\\
	0&\overline{w}\g_8&0&\g_2&\g_1&0}
\end{equation}
which is a shifted and scaled version of $\Delta^{\theta}_{\gamma}$. 
% Now, set $w_1:=\Rea w$, $z_1:=\Rea z$ and $w_2:=\Ima w$, $z_2:=\Ima z$. From
%
% \begin{equation}\label{CondDet}
% \det(\lambda\Id-\Pi^{\theta}_{\gamma})=0
% \end{equation}
% %
We calculate
\begin{align*}
    &\det(\lambda\Id-\Pi^{\theta}_{\gamma})
    =\lambda^6
    -
    \lambda^4 
    \left( 2 \g_1^2 + 2 \g_2^2 + 2 \g_3^2 + \g_7^2 + \g_8^2 + \g_9^2 \right)-
    4
    \lambda^3 
    \g_1 \g_2 \g_3
    \\
    &+\lambda^2
        \big(
        \g_1^4 + \g_2^4 + \g_3^4
        +
        2 \g_1^2 \g_2^2 
        +
        2 \g_2^2 \g_3^2
        +
        2 \g_3^2 \g_1^2
        +
        2 \g_1^2 \g_7^2
        +
        2 \g_2^2 \g_9^2
        +
        2 \g_3^2 \g_8^2
        +
        \\
        &\qquad \qquad
        +
        \g_7^2 \g_8^2
        +
        \g_8^2 \g_9^2
        +
        \g_9^2 \g_7^2
        \big)
    \\
    &+
    4
    \lambda
        \g_1 \g_2 \g_3 
        \left(
            \g_1^2 + \g_2^2 + \g_3^2
        \right)
    \\
    &-
        \g_1^4 \g_7^2 
        -
        \g_2^4 \g_9^2
        -
        \g_3^4 \g_8^2
        -
        \g_7^2 \g_8^2 \g_9^2
        +
        4 \g_1^2 \g_2^2 \g_3^2
    \\
    &-
    \left( w + \overline{w} \right)
        \left(
            \lambda^2 \g_2^2 \g_7 \g_8 
            +
            2 \lambda \g_1 \g_2 \g_3 \g_7 \g_8
            +
            \g_1^2 \g_3^2 \g_7 \g_8 
            -
            \g_2^2 \g_7 \g_8 \g_9^2
        \right)
    \\
    &-
    \left( z + \overline{z} \right)
        \left(
            \lambda^2 \g_3^2 \g_7 \g_9 
            +
            2 \lambda \g_1 \g_2 \g_3 \g_7 \g_9
            +
            \g_1^2 \g_2^2 \g_7 \g_9
            -
            \g_3^2 \g_7 \g_8^2 \g_9
        \right)
    \\
    &-
    \left( w \overline{z} + \overline{w} z \right)
        \left(
            \lambda^2 \g_1^2 \g_8 \g_9 
            +
            2 \lambda \g_1 \g_2 \g_3 \g_8 \g_9
            +
            \g_2^2 \g_3^2 \g_8 \g_9
            -
            \g_1^2 \g_7^2 \g_8 \g_9
        \right).
\end{align*}
Since $w + \overline w = 2 \cos(\theta_1)$, $z + \overline z = 2 \cos(\theta_2)$, and $w \overline z + \overline w z = 2 \cos (\theta_1 - \theta_2)$ are linearly on $\TT^2$, $\lambda$ is a $\theta$-independent eigenvalue if and only if the conditions

\begin{equation}
    \label{CondIXXX}
 \begin{split}
 \lambda^2 \g_2^2
            +
            2 \lambda \g_1 \g_2 \g_3 
            +
            \g_1^2 \g_3^2 
            -
            \g_2^2 \g_9^2
            &=
            0,
            \\
  \lambda^2 \g_3^2
            +
            2 \lambda \g_1 \g_2 \g_3 
            +
            \g_1^2 \g_2^2 
            -
            \g_3^2 \g_8^2 
            &=
            0,
            \\
  \lambda^2 \g_1^2
            +
            2 \lambda \g_1 \g_2 \g_3 
            +
            \g_2^2 \g_3^2 
            -
            \g_1^2 \g_7^2 
            &= 0
            ,
\end{split}
\end{equation}
as well as
\begin{equation}
    \label{KugelbedingungSK}
   \begin{split}
 &\lambda^6
    -
    \lambda^4
        \left( 2 \g_1^2 + 2 \g_2^2 + 2 \g_3^2 + \g_7^2 + \g_8^2 + \g_9^2 \right)
        -
    4
    \lambda^3 
        \g_1 \g_2 \g_3
   \\
    &+\lambda^2
        \big(
        \g_1^4 + \g_2^4 + \g_3^4
        +
        2 \g_1^2 \g_2^2 
        +
        2 \g_2^2 \g_3^2
        +
        2 \g_3^2 \g_1^2
        +
        2 \g_1^2 \g_7^2
        +
        2 \g_2^2 \g_9^2
        +
        2 \g_3^2 \g_8^2
        +
        \\
        &\qquad \qquad
        +
        \g_7^2 \g_8^2
        +
        \g_8^2 \g_9^2
        +
        \g_9^2 \g_7^2
        \big)
    \\
    +&
    4
    \lambda
        \g_1 \g_2 \g_3 
        \left(
            \g_1^2 + \g_2^2 + \g_3^2
        \right)
    \\
    -&
        \g_1^4 \g_7^2 
        -
        \g_2^4 \g_9^2
        -
        \g_3^4 \g_8^2
        -
        \g_7^2 \g_8^2 \g_9^2
        +
        4 \g_1^2 \g_2^2 \g_3^2=0
    \end{split}
\end{equation}
hold.\footnote{As we will see later, despite its complexity, \eqref{KugelbedingungSK} will not impose further restrictions and hold in all relevant cases. This appears to be a consequence of symmetries of the lattice and the operator.}
	Conditions~\eqref{CondIXXX} imply that any $\theta$-independent eigenvalue of the matrix $\Pi^{\theta}_{\gamma}$ must satisfy
	\[
	 \lambda
	 =
	 - \frac{\g_1 \g_3}{\g_2} \pm \g_9,
	 \quad
	 \lambda
	 =
	 - \frac{\g_1 \g_2}{\g_3} \pm \g_8,
	 \quad
	 \text{and}
	 \quad
	 \lambda
	 =
	 - \frac{\g_2 \g_3}{\g_1} \pm \g_7.
	\]
Since all $\gamma_i$ are positive, the only way for these three equations to have the same set of solutions, that is for two flat bands to exist, is therefore

	\begin{equation}\label{HHH}
	-\frac{\g_1\g_3}{\g_2}+\g_9=-\frac{\g_1\g_2}{\g_3}+\g_8=-\frac{\g_2\g_3}{\g_1}+\g_7
	\end{equation}  
	together with
	\begin{equation}\label{HHH2}
	-\frac{\g_1\g_3}{\g_2}-\g_9=-\frac{\g_1\g_2}{\g_3}-\g_8=-\frac{\g_2\g_3}{\g_1}-\g_7.
	\end{equation}  
	This implies that the matrix $\Pi_\gamma^\theta$ can only have two $\theta$-independent eigenvalues if there are $\alpha, \beta > 0$ with
	\[ 
	\alpha = \g_7=\g_8=\g_9 \qquad \mbox{ and }\qquad \beta = \g_1=\g_2=\g_3 , 
	\]
	that is the monomeric case, and the only candidates for these eigenvalues are $-\beta \pm \alpha$.
	To see that they are indeed eigenvalues, one verifies by an explicit calculation that condition~\eqref{KugelbedingungSK} is also fulfilled.
	This shows the stated equivalence.

\end{proof}

	Next, we further describe the spectrum of the monomeric Super-Kagome lattice.
	\begin{theorem}[Band gaps in the perturbed Super-Kagome lattice]\label{thm:SUPERKagomeGaps}
    Consider the perturbed Super-Kagome lattice with Laplacian~\eqref{eq:laplacian} with fixed vertex weight $\mu > 0$ and monomeric edge weights $\alpha, \beta > 0$, satisfying $2 \alpha + \beta = \mu$ as characterized in Theorem~\ref{thm:FlatBandsSuperKagome}.
    Then, the spectrum is given by
    \[
     I_1 \cup I_2
     :=
    \left[
     0,
     \left(
        1 - \frac{\alpha}{2 \mu} 
     \right)
     - 
     \frac{\lvert 3 \alpha - 2 \beta \rvert}{2 \mu}
     \right]
     \bigcup
     \left[
     \left(
        1 - \frac{\alpha}{2 \mu} 
     \right)
     + 
     \frac{\lvert 3 \alpha - 2 \beta \rvert}{2 \mu}
     ,
     2 - \frac{\alpha}{\mu}
     \right]  
     \]
     with flat bands at $\frac{3 \alpha}{\mu}$ and $2 - \frac{\alpha}{\mu}$.
	\end{theorem}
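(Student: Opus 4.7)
My plan is to work with the weighted adjacency matrix $\Pi_\gamma^\theta$, which is spectrally equivalent to $\Delta_\gamma^\theta$ via $\Delta_\gamma^\theta = \Id - \mu^{-1}\Pi_\gamma^\theta$, and determine its full spectrum in the monomeric case $\gamma_1 = \dots = \gamma_6 = \alpha$, $\gamma_7 = \gamma_8 = \gamma_9 = \beta$. Specializing the characteristic polynomial from the proof of Theorem~\ref{thm:FlatBandsSuperKagome} to the monomeric case, each of the three $\theta$-dependent coefficients collapses to the same expression $\alpha^2\beta^2((\lambda+\alpha)^2 - \beta^2)$, so the characteristic polynomial takes the convenient form
\[
 \det(\lambda\Id - \Pi_\gamma^\theta) = P_0(\lambda) - 2\alpha^2\beta^2\bigl((\lambda+\alpha)^2 - \beta^2\bigr) F(\theta),
\]
where $F(\theta) := \cos\theta_1 + \cos\theta_2 + \cos(\theta_1 - \theta_2)$ takes all values in $[-3/2, 3]$ by Lemma~3.1 in~\cite{TaeuferPeyerimhoff}.

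Next, I would use that Theorem~\ref{thm:FlatBandsSuperKagome} already identifies two flat bands at $\lambda \in \{-\alpha \pm \beta\}$, corresponding to the factor $(\lambda+\alpha)^2 - \beta^2$ of $P_0$. Dividing out, I can write
\[
 \det(\lambda\Id - \Pi_\gamma^\theta) = \bigl((\lambda+\alpha)^2 - \beta^2\bigr)\bigl(Q_0(\lambda) - 2\alpha^2\beta^2 F(\theta)\bigr)
\]
for a quartic $Q_0$, and a direct polynomial division reveals the critical biquadratic structure
\[
 Q_0(\lambda) = \bigl(\lambda^2 - \alpha\lambda - (2\alpha^2+\beta^2)\bigr)^2 - 3\alpha^2\beta^2.
\]
With this identity, the four remaining (necessarily $\theta$-dependent) eigenvalues of $\Pi_\gamma^\theta$ solve $\bigl(\lambda^2 - \alpha\lambda - (2\alpha^2+\beta^2)\bigr)^2 = \alpha^2\beta^2(3 + 2F(\theta))$. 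As $\theta$ varies, the right-hand side continuously sweeps $[0, 9\alpha^2\beta^2]$, so setting $\lambda^2 - \alpha\lambda - (2\alpha^2 + \beta^2) = s$ with $s \in [-3\alpha\beta, 3\alpha\beta]$ and solving the remaining quadratic yields $\lambda = (\alpha \pm \sqrt{9\alpha^2 + 4\beta^2 + 4s})/2$. Invoking the perfect-square identities $9\alpha^2 + 4\beta^2 \pm 12\alpha\beta = (3\alpha \pm 2\beta)^2$, the radicand ranges over $[(3\alpha-2\beta)^2, (3\alpha+2\beta)^2]$, and the four $\theta$-dependent $\Pi$-eigenvalues exactly fill the two intervals $[-\alpha-\beta,\, (\alpha - |3\alpha - 2\beta|)/2]$ and $[(\alpha + |3\alpha - 2\beta|)/2,\, 2\alpha+\beta]$.

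Finally, translating via $\lambda \mapsto 1 - \lambda/\mu$ with $\mu = 2\alpha+\beta$ turns these two $\Pi$-intervals into precisely $I_1$ and $I_2$. It remains to check that the two flat bands do not contribute additional pieces of spectrum: a brief calculation shows that $2 - \alpha/\mu$ is the upper endpoint of $I_2$, while $3\alpha/\mu$ coincides either with the top of $I_1$ (if $3\alpha < 2\beta$) or with the bottom of $I_2$ (if $3\alpha \geq 2\beta$). Hence both flat bands sit at gap boundaries and are automatically contained in $I_1 \cup I_2$, completing the identification of the spectrum. The main obstacle I anticipate is spotting the biquadratic factorization of $Q_0$: it is not apparent from the raw expansion of the sextic characteristic polynomial, but once conjectured (motivated by the expected two-band symmetric structure and the occurrence of $(\lambda+\alpha)^2 - \beta^2$ at the flat bands), the rest reduces to elementary algebra combined with the known range of $F(\theta)$.
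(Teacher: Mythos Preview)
Your proposal is correct and follows essentially the same route as the paper: factor the characteristic polynomial of $\Pi_\gamma^\theta$ as $((\lambda+\alpha)^2-\beta^2)$ times a quartic depending on $\theta$ only through $F(\theta)$, solve the quartic, use the range $[-3/2,3]$ of $F$, and translate back to $\Delta_\gamma^\theta$. The paper simply lists the six roots with nested radicals and then observes the band structure, whereas your explicit biquadratic identity $Q_0(\lambda)=(\lambda^2-\alpha\lambda-(2\alpha^2+\beta^2))^2-3\alpha^2\beta^2$ makes both the solution of the quartic and the touching of the two pairs of non-flat bands more transparent; mathematically the two arguments coincide.
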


     The spectrum and the position of the flat bands have been plotted in Figure~\ref{fig:Super_Kagome}.
     The spectrum generically consists of two distinct intervals (bands) except for the case $3 \alpha = 2 \beta$, that is $\alpha = \frac{2 \mu}{7}$, in which the two bands touch and the spectrum consists of one interval with an embedded flat band in the middle as well as a flat band at its maximum.
     This case $\alpha = \frac{2 \mu}{7}$ connects two regimes with different spectral pictures:
     \begin{itemize}
      \item      
     If $\alpha > \frac{2 \mu}{7}$ the spectrum consists of two intervals the upper one of which has \emph{two flat bands at its endpoints}.
     In the special case of uniform edge weights (that is $\alpha = \frac{\mu}{3}$, this has already been observed, for instance in~\cite{TaeuferPeyerimhoff}.
     \item
     If $\alpha < \frac{2 \mu}{7}$, the spectrum will again consist of two intervals each of which will have a flat band at its maximum.
     Somewhat surprisingly, the lower flat band has now attach itself to the lower interval $I_2$ upon passing the critical parameter $\alpha = \frac{2 \mu}{7}$.
     \end{itemize}
     Another noteworthy observation is that no gap opens within the intervals $I_1$ and $I_2$, despite them being generated by two distinct Floquet eigenvalues and the density of states measure vanishing at a point in the interior of the bands, see again~\cite{TaeuferPeyerimhoff} for plots of the integrated density of states in the case of constant edge weights.
     In particular, this distinguishes the monomeric Super-Kagome lattice from the monomeric Kagome lattice where such a gap indeed opens within the spectrum at points of zero spectral density.

	\begin{figure}[ht]
    \begin{tikzpicture}[xscale = 4, yscale = 8]
        
        % Auxiliary line ``between the two bands''
%         \draw[blue, thick, dotted]  (1,0) -- (.75,.5);
        
        % Band I_1 for 3 \alpha < 2 \beta
        \fill[red!20] (0,0) -- (3 * .28571428571, .28571428571) -- (0,.5) -- (0,0);
        
        %Band I_2 for 3 \alpha < 2 \beta
        \fill [red!20] (2,0) -- (3 * .28571428571, .28571428571) -- (1.5, .5) -- (2,0);

        %Flat bands
        \draw[red, very thick, dashed] (2,0) -- (1.5,.5);
        \draw[red, very thick, dashed] (0,0) -- (1.5,.5);
        
        % Names of the bands
        \draw (.35,.25) node {$I_1$};
        \draw (1.45,.25) node {$I_2$};
        
        %Line in the special case of equidistant weights
        \draw[very thick, dotted] (0,.333333) -- (2,.333333);

        %Grid
        \draw[thick, ->] (0,0) -- (2.05,0);
        \draw[thick] (0,0) -- (0,.5);
        
        \draw (-.2,.5) node {$\alpha = \frac{\mu}{2}$};
        \draw[thick] (-.05,.5) -- (.05,.5);
        \draw (-.2,.333) node {$\frac{\mu}{3}$};
        \draw[thick] (-.05,.333333) -- (.05,.333333);

        \draw (-.1,.28571428571) node {$\frac{2 \mu}{7}$};
        \draw[thick] (-.05,.28571428571) -- (.05,.28571428571);        
        \draw (-.2,0) node {$\alpha = 0$};
        
%         \draw (2.25,0) node {$\sigma(\Delta_\gamma)$};
        
        \draw (2,-.1) node {$2$};
        \draw[thick] (2,.025) -- (2,-.025);

        %Symbols
        \begin{scope}[xshift = 2.2cm]
         \draw[very thick, red, dashed] (0,0) -- (.25,0);
         \draw[anchor = west] (.25,0) node {Flat bands};

         \fill[red!20] (0,.1) rectangle (.25,.2);
         \draw[anchor = west] (.25,.15) node {$\sigma(\Delta_\gamma)$};
         
         \draw[very thick, dotted] (0,.3) -- (.25,.3);
%          \draw[anchor = west] (.25,.3) node {Known case of};
         \draw[anchor = west] (.25,.3) node {Constant edge weights};
         
%          \draw[thick, dotted, blue] (0,.45) -- (.25,.45);
%          \draw[anchor = west] (.25,.475) node {Middle between};
%          \draw[anchor = west] (.25,.425) node {the two bands};

        \end{scope}
    \end{tikzpicture}

    \caption{Spectrum of the monomeric $(3.12^2)$ ``Super-Kagome'' lattice with vertex weight $\mu > 0$ as a function of the parameter $\alpha \in (0, \frac{\mu}{2})$, describing the edge weights on edges adjacent to triangles.}
    \label{fig:Super_Kagome}
	\end{figure}
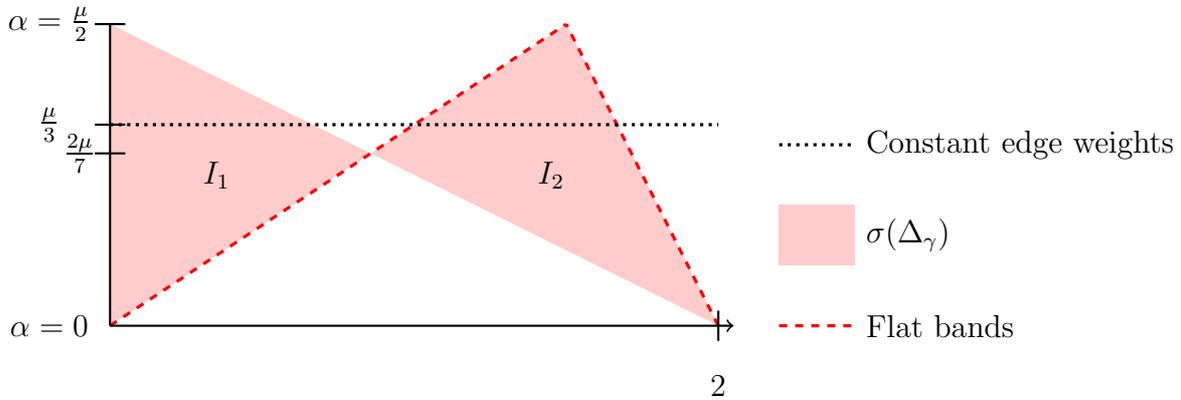

	\begin{proof}[Proof of Theorem~\ref{thm:SUPERKagomeGaps}]
	 In the monomeric case, the characteristic polynomial $ \det (\lambda \Id - \Pi_\gamma^\theta)$ of the matrix
	 $\Pi_\gamma^\theta$ simplifies to
	 \begin{equation*}\begin{split}
	 (
	 (&\alpha + \lambda)^2 - \beta^2
	 )\cdot \\
	 &(
	 \lambda^4
	 - 
	 2 \alpha \lambda^3
	 -
	 (3 \alpha^2 + 2 \beta^2) \lambda^2
	 + 
	 (4 \alpha^3 + 2 \alpha \beta^2) \lambda
	 +
	 4 \alpha^4+ \alpha^2 \beta^2 + \beta^4 - 2 \alpha^2 \beta^2 F(\theta_1,\theta_2))\ ,
	 \end{split}
	 \end{equation*}
	 where $F(\theta_1, \theta_2) = \cos(\theta_1) + \cos(\theta_2) + \cos(\theta_1 + \theta_2)$. Its six roots are
    $$
     \left\{
     -\alpha \pm \beta,
     \frac{1}{2}
     \left(
        \alpha 
        \pm
        \sqrt{9 \alpha^2 + 4 \beta^2
        \pm
        4 \alpha \beta \sqrt{3 + 2 F(\theta_1,\theta_2)}
        }
     \right)
     \right\}\ ,
    $$
    whence the eigenvalues of $\Delta_\gamma^\theta$ are given by
    \begin{align*}
    \lambda_1(\theta, \gamma)
    &=
    1 
    -
    \frac{1}{2 \mu}
    \left(
        \alpha 
        +
        \sqrt{9 \alpha^2 + 4 \beta^2 
        +
        4 \alpha \beta \sqrt{3 + 2 F(\theta_1,\theta_2)}
        }
    \right)
    \ ,
    \\
    \lambda_2(\theta, \gamma)
    &=
    1 
    -
    \frac{1}{2 \mu}
    \left(
        \alpha
        +
        \sqrt{9 \alpha^2 + 4 \beta^2 
        -
        4 \alpha \beta \sqrt{3 + 2 F(\theta_1,\theta_2)}
        }
    \right)\ ,
    \\
    \lambda_3(\theta, \gamma)
    &=
    1 + \frac{\alpha - \beta}{\mu}
    =
    \frac{3 \alpha}{\mu}
    =
    \begin{cases}
    1 - \frac{\alpha - \lvert 3 \alpha - 2 \beta \rvert}{2 \mu} 
    &\text{if $3 \alpha \geq 2 \beta$}\ ,
    \\
    1 - \frac{\alpha - \lvert 3 \alpha - 2 \beta \rvert}{2 \mu}
    & \text{if $3 \alpha < 2 \beta$}\ ,
    \end{cases}    
    \end{align*}
    \begin{align*}
    \lambda_4(\theta, \gamma)
    &=
    1 
    -
    \frac{1}{2 \mu}
     \left(
        \alpha 
        -
        \sqrt{9 \alpha^2 + 4 \beta^2 
        -
        4 \alpha \beta \sqrt{3 + 2 F(\theta_1,\theta_2)}
        }
    \right)\ ,
    \\
    \lambda_5(\theta, \gamma)
    &=
    1 
    -
    \frac{1}{2 \mu}
     \left(
        \beta 
        -
        \sqrt{9 \alpha^2 + 4 \beta^2 
        +
        4 \alpha \beta \sqrt{3 + 2 F(\theta_1,\theta_2)}
        }
    \right)\ ,
    \\
    \lambda_6(\theta, \gamma)
    &=
    1 + \frac{ \alpha + \beta }{\mu}
    =
    2 - \frac{\alpha}{\mu}\ .
    \end{align*}
    Using that the map $\TT^2 \ni (\theta_1, \theta_2) \mapsto F(\theta_1, \theta_2)$ takes all values in the interval $(- 3/2, 3)$, we conclude that the bands, generated by $\lambda_1(\theta, \gamma)$ and $\lambda_2(\theta, \gamma)$, as well as the bands generated by $\lambda_4(\theta, \gamma)$ and $\lambda_5(\theta, \gamma)$ always touch, and the spectrum consists of the two intervals
    \begin{align*}
     &\left[
     \min_{\theta \in \TT^2}
     \lambda_1(\theta, \gamma)
     ,
     \max_{\theta \in \TT^2}
     \lambda_2(\theta, \gamma)
     \right]
     \bigcup
     \left[
     \min_{\theta \in \TT^2}
     \lambda_4(\theta, \gamma)
     ,
     \max_{\theta \in \TT^2}
     \lambda_5(\theta, \gamma)
     \right]
     \\
     =
     &\left[
     0,
     1 - \frac{\alpha + \lvert 3 \alpha - 2 \beta \rvert}{2 \mu}
     \right]
     \bigcup
     \left[
     1 - \frac{\alpha - \lvert 3 \alpha - 2 \beta \rvert}{2 \mu}
     ,
     2 - \frac{\alpha}{2 \mu}
     \right]
     \\
     =
     &\left[
     0,
     \left(
        1 - \frac{\alpha}{2 \mu} 
     \right)
     - 
     \frac{\lvert 3 \alpha - 2 \beta \rvert}{2 \mu}
     \right]
     \bigcup
     \left[
     \left(
        1 - \frac{\alpha}{2 \mu} 
     \right)
     + 
     \frac{\lvert 3 \alpha - 2 \beta \rvert}{2 \mu}
     ,
     2 - \frac{\alpha}{\mu}
     \right].
     \qedhere
    \end{align*}

	\end{proof}

One might now wonder under which conditions \emph{only one flat band} exists.
The next theorem completely identifies all parameters for which one flat band exists:

\begin{theorem}
    \label{thm:one_flat_band_SK}
 Consider the perturbed Super-Kagome lattice with Laplacian~\eqref{eq:laplacian}, fixed vertex weight $\mu > 0$, and periodic edge weights $\gamma_1, \dots, \gamma_9 > 0$ satisfying the condition \eqref{eq:vertex_and_edge_weights} on vertex and edge weights.
 The set of $(\gamma_i)$ such that exactly one flat band exists consists of \textbf{six connected components} which have no mutual intersections and have no intersection with the two-flat-band parameter set, identified in Theorem~\ref{thm:FlatBandsSuperKagome}.
 
 The solution space is invariant under those permutations of the $\gamma_i$ which correspond to rotations of the lattice by $\frac{2\pi}{3}$, and $\frac{4\pi}{3}$.
 Modulo these permutations, the two connected components can be described as follows
 \begin{itemize}
  \item 
  A one-dimensional submanifold, isomorphic to an interval, and explicitely descibed in equation~\eqref{eq:solution_Case-++},
  \item
  Two one-dimensional submanifolds each isomorphic to an interval, explicitely described in~\eqref{eq:solution_Case+--_a}, and~\eqref{eq:solution_Case+--_b}, which intersect in a single point.
  \end{itemize}
\end{theorem}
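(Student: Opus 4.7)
The starting point is the trichotomy from the proof of Theorem~\ref{thm:FlatBandsSuperKagome}: under constant vertex weight, \eqref{CondIXXX} forces any $\theta$-independent eigenvalue of $\Pi^\theta_\gamma$ to simultaneously satisfy
\[
\lambda + \frac{\g_1 \g_3}{\g_2} = s_1 \g_9, \qquad \lambda + \frac{\g_1 \g_2}{\g_3} = s_2 \g_8, \qquad \lambda + \frac{\g_2 \g_3}{\g_1} = s_3 \g_7,
\]
for some sign pattern $(s_1,s_2,s_3)\in\{\pm1\}^3$. Thus the flat bands of $\Delta_\gamma$ are in bijection with those among the $8$ sign patterns which are consistent, and exactly one flat band corresponds to exactly one consistent pattern. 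My plan is to (a) rule out multiple simultaneous consistencies outside the monomeric case, (b) rule out the fixed patterns $(+\!+\!+)$ and $(-\!-\!-)$ outside the monomeric case, (c) solve each of the six remaining mixed patterns under the vertex constraints~\eqref{eq:vertex_conditions_Super_Kagome}, and (d) determine the topology of the resulting solution sets.

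For (a), subtracting the equations of two consistent patterns $(s_1,s_2,s_3)$ and $(s_1',s_2',s_3')$ gives $(s_i-s_i')\g_{9-?}$ all equal to the same constant $\delta$; positivity of the $\g_i$ then forces the patterns to be complementary. For the pair $(+\!+\!+)/(-\!-\!-)$ this reproduces monomericity by Theorem~\ref{thm:FlatBandsSuperKagome}; for each of the other three complementary pairs, combining the resulting equalities with~\eqref{eq:vertex_conditions_Super_Kagome} leads to a contradiction with positivity (a short calculation yields, e.g.\ $\g_1=\g_2$ together with $\g_1>\g_2$). For (b), I would show directly that the three equations of $(+\!+\!+)$, combined with the differences $\g_7-\g_8=\g_1-\g_3$, $\g_7-\g_9=\g_1-\g_2$, $\g_9-\g_8=\g_3-\g_2$ coming from~\eqref{eq:vertex_conditions_Super_Kagome}, force $\g_1=\g_2=\g_3$ and hence the monomeric configuration; the same argument works for $(-\!-\!-)$. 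In particular, the one-flat-band set is disjoint from the monomeric two-flat-band set.

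For (c) and (d), I would pick a representative of each rotational orbit of mixed patterns. Under the three-fold rotation acting by $\g_i\mapsto \g_{i+1\bmod 3}$ and $\g_{6+i}\mapsto\g_{6+i+1\bmod 3}$, the orbits are $\{(-\!+\!+),(+\!-\!+),(+\!+\!-)\}$ and $\{(+\!-\!-),(-\!+\!-),(-\!-\!+)\}$. For the representative $(-\!+\!+)$, two flat-band equations plus three vertex-weight equations give five relations on six unknowns, and I expect the elimination to be irreducible, producing a single arc in the positive octant described by~\eqref{eq:solution_Case-++}. For the representative $(+\!-\!-)$, the analogous elimination should contain a polynomial that factors; the two factors then parametrize the two arcs~\eqref{eq:solution_Case+--_a} and~\eqref{eq:solution_Case+--_b}, their intersection being the unique point where both factors vanish. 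The rotational symmetry then propagates these two ``template'' components to six components in total, and positivity together with the disjointness already established implies these components are mutually disjoint.

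The main obstacle is the algebraic work behind (d): identifying the correct factorization in the $(+\!-\!-)$ case and showing that the intersection really consists of a single point inside the positive octant (rather than an empty or higher-dimensional locus). A secondary technical burden, flagged already in the footnote to the proof of Theorem~\ref{thm:FlatBandsSuperKagome}, is verifying that the auxiliary identity \eqref{KugelbedingungSK} automatically holds once \eqref{CondIXXX} is satisfied at a candidate $\lambda$; I would do this by rewriting the left-hand side of~\eqref{KugelbedingungSK} as a polynomial combination of the three relations in~\eqref{CondIXXX} and verifying the resulting identity through direct expansion, exploiting the symmetry in the lattice structure.
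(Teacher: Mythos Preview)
Your plan matches the paper's proof: reduce to the eight sign patterns via~\eqref{CondIXXX}, show that the uniform patterns $(\pm\pm\pm)$ force monomericity (hence two flat bands), and then explicitly solve the mixed patterns $(-\,+\,+)$ and $(+\,-\,-)$ under the vertex constraints, the latter indeed factoring into the two branches~\eqref{eq:solution_Case+--_a} and~\eqref{eq:solution_Case+--_b} meeting at a single point. Your step~(a) and the proposed uniform verification of~\eqref{KugelbedingungSK} are slightly more systematic than the paper, which leans on Theorem~\ref{thm:FlatBandsSuperKagome} for disjointness and checks~\eqref{KugelbedingungSK} by direct substitution in each case; be prepared to fall back to that case-by-case check if the polynomial-combination identity does not materialise cleanly.
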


\begin{proof}[{Proof of Theorem~\ref{thm:one_flat_band_SK}}]
    Recall that due to the reductions made at the beginning of the section, after fixing the constant vertex weight $\mu > 0$, the space of edge weights is a $3$-dimensional manifold in the $6$-dimensional parameter space $\{ \gamma_1, \gamma_2, \gamma_3, \gamma_7, \gamma_8, \gamma_9 > 0 \}$, subject to the conditions
    \begin{equation}
    \label{eq:vertex_weights_proof}
     \g_1+\g_3+\g_9= \g_1+\g_2+\g_8=\g_2+\g_3+\g_7=\mu .
    \end{equation}
    Furthermore, from the proof of Theorem~\ref{thm:FlatBandsSuperKagome} we infer that $\Delta_\gamma$ has a flat band at $\lambda$ if and only if
    the weighted adjacency matrix $\Pi_\gamma^\theta$ has the $\theta$-independent eigenvalue $\tilde \lambda := \mu(1-\lambda)$.
    This requires in particular that
    \begin{equation}\label{EquationPROOFXXX}
	\tilde \lambda 
	=
	-\frac{\g_1\g_3}{\g_2}\pm\g_9=-\frac{\g_1\g_2}{\g_3}\pm\g_8=-\frac{\g_2\g_3}{\g_1}\pm\g_7
	\end{equation} 
    holds with a certain combination of plus and minus signs.
    Now, if equality in~\eqref{EquationPROOFXXX} holds with all three signs positive or all three signs negative, respectively, then the argument in the proof of Theorem~\ref{thm:FlatBandsSuperKagome} shows that this already implies that the edge weights are monomeric, the identities also hold with the opposite sign, the additional condition~\eqref{KugelbedingungSK} is fulfilled, and there are two flat bands.
    As a consequence, the only chance for the existence of exactly one flat band is~\eqref{EquationPROOFXXX} to hold with different signs in front of $ \gamma_7, \gamma_8, \gamma_9 $. Also, it is immediately clear that 
  \eqref{EquationPROOFXXX} with different signs does not allow for a monomeric and non-zero solution and hence the solution space consists of at most six mutually disjoint components which have no intersection with the two-flat-band manifold, identified in Theorem~\ref{thm:FlatBandsSuperKagome}.
    
    %In order to continously change one sign in~\eqref{EquationPROOFXXX} one would need one of the weights $\{ \gamma_7, \gamma_8, \gamma_9 \}$ to become zero during the variation, which is impossible.
   % This shows that the solution space consists of at most six mutually disjoint components which have no intersection with the two-flat-band manifold, identified in Theorem~\ref{thm:FlatBandsSuperKagome}.

    By symmetry, it suffices to investigate two out of these six cases:
    \begin{equation}
    \label{eq:case_1}
    \text{\textbf{Case(-\ +\ +):}}
    \qquad
     -\frac{\g_1\g_3}{\g_2} - \g_9=-\frac{\g_1\g_2}{\g_3}+\g_8=-\frac{\g_2\g_3}{\g_1}+\g_7
     =
     \tilde \lambda\ ,
    \end{equation}
    and
    \begin{equation}
    \label{eq:case_2}
    \text{\textbf{Case(+\ -\ -):}}
    \qquad
     -\frac{\g_1\g_3}{\g_2} + \g_9=-\frac{\g_1\g_2}{\g_3}-\g_8=-\frac{\g_2\g_3}{\g_1}-\g_7
    =
     \tilde \lambda\ .
    \end{equation}
    To solve \textbf{Case(-\ +\ +)}, combine the second identities in in~\eqref{eq:vertex_weights_proof} and~\eqref{eq:case_1}, to deduce
    \[
     \gamma_3 - \gamma_1
     =
     \frac{\gamma_2}{\gamma_1 \gamma_3} (\gamma_1^2 - \gamma_3^2)
    \]
    which, recalling $\gamma_i > 0$, is only possible if $\gamma_1 = \gamma_3$. 
    But then, by~\eqref{eq:case_1}, $\gamma_7 = \gamma_8$.
    Calling $\alpha' := \gamma_2$, and $\beta' := \gamma_9$, we can use~\eqref{eq:vertex_weights_proof}, to further express
    \begin{equation}
    \label{eq:alpha_and_beta_used_case_gamma_1=gamma_3}
     \gamma_1 = \gamma_3 = \frac{\mu - \beta'}{2},
     \quad
     \text{and}
     \quad
     \gamma_7 = \gamma_8 = \frac{\mu + \beta'}{2} - \alpha'.
    \end{equation}
    Next, we eliminate $\beta'$ by resolving the yet unused first identity in~\eqref{eq:case_1}, which yields
    \begin{align*}
    &- \frac{( \mu - \beta')^2}{4 \alpha'} - \beta'
    =
    - \alpha' + \frac{\mu + \beta'}{2} - \alpha'
    \\
    \Leftrightarrow
    \quad
    &
    \beta'
    =
    \mu -  3 \alpha' 
    \pm
    \sqrt{17 \alpha'^2 - 8 \alpha' \mu}.
    \end{align*}
    This only has real solutions if $\alpha' > \frac{8}{17} \mu > \frac{1}{3} \mu$, thus only
    \[
    \beta'
    =
    \mu -  3 \alpha' 
    +
    \sqrt{17 \alpha'^2 - 8 \alpha' \mu}.
    \]
    can be a positive solution.
    Furthermore, we need $\beta' \in (0, \mu)$, which is the case if and only if 
    \[
    \gamma_2 = \alpha' \in \left( \frac{\mu}{2}, \mu \right).
    \]
    We therefore find the one-parameter solution set
    \begin{equation}
    \label{eq:solution_Case-++}
    \text{\textbf{Case (-\ +\ +)}}
    \quad 
    \begin{cases}
    	\g_1 = \g_3 
		&= \frac{\mu - \beta'}{2},\\
		\g_2 = \alpha' 
		&\in \left( \frac{\mu}{2}, \mu \right),\\
		\g_7 = \g_8 
		&= \frac{\mu + \beta'}{2} - \alpha',\\
		\g_9 
		= \beta' 
		&:= \mu - 3 \alpha' + \sqrt{17 \alpha'^2 - 8 \alpha \mu}
    \end{cases}
    \end{equation}
    with energy
    \[
     \tilde \lambda
     =
     - \g_2 + \g_7
     =
     -2 \alpha' + \frac{\mu + \beta'}{2}
     =
     -2 \alpha'
     +
     \frac{2 \mu - 3 \alpha' + \sqrt{17 \alpha'^2-8 \alpha' \mu}}{2}.
    \]
    Finally, an explicit calculation shows that with these parameters,~\eqref{KugelbedingungSK} is indeed fulfilled.
    
    As for \textbf{Case(+\ -\ -)}, we combine the second identity in~\eqref{eq:vertex_weights_proof} with the second identity in~\eqref{eq:case_2} to deduce
    \begin{equation}
    \label{eq:case_+--}
     \gamma_3 - \gamma_1
     =
     \frac{\gamma_2}{\gamma_1 \gamma_3} (\gamma_3^2 - \gamma_1^2)\ .
    \end{equation}
    Identity~\eqref{eq:case_+--} has two types of solutions:
    \\
    \textbf{Case(+\ -\ -)(a)}: 
    $\gamma_1 = \gamma_3$.\\
    As before we find $\gamma_7 = \gamma_8$. Let $\alpha' := \gamma_2$, $\beta' := \gamma_9$, and combine the remaining first identity in~\eqref{eq:case_2} with~\eqref{eq:alpha_and_beta_used_case_gamma_1=gamma_3} to solve for $\beta'$, finding
    \begin{align*}
    &- \frac{(\mu - \beta')^2}{4 \alpha'}
    +
    \beta'
    =
    - 
    \frac{\mu + \beta'}{2}
    \\
    \Leftrightarrow
    \quad
    &
    \beta'
    =
    \mu + 3 \alpha'
    \pm
    \sqrt{9 \alpha'^2 + 8 \alpha' \mu}.
    \end{align*}
    Only the solution 
    \[
    \beta'
    =
    \mu + 3 \alpha'
    -
    \sqrt{9 \alpha'^2 + 8 \alpha' \mu}
    \]
    has a chance to be in $(0, \mu)$, and, indeed, this is the case if and only if
    \[
    \gamma_2 = \alpha' \in \left(0, \frac{\mu}{2} \right).
    \]
    We obtain the one-parameter solution set
    \begin{equation}
    \label{eq:solution_Case+--_a}
    \text{\textbf{Case(+\ -\ -)(a)}}
    \quad 
    \begin{cases}
    	\g_1 = \g_3 
		&= \frac{\mu - \beta'}{2},\\
		\g_2 = \alpha' 
		&\in \left(0, \frac{\mu}{2} \right),\\
		\g_7 = \g_8 
		&= \frac{\mu + \beta'}{2} - \alpha',\\
		\g_9 
		= \beta' 
		&:= \mu + 3 \alpha' - \sqrt{9 \alpha'^2 + 8 \alpha' \mu}
    \end{cases}
    \end{equation}
    with energy
    \[
     \tilde{\lambda}
     =
     - \g_2 - \g_7
     =
     - \frac{\mu + \beta'}{2}
     =
     -
     \frac{2 \mu + 3 \alpha' - \sqrt{9 \alpha'^2+8 \alpha' \mu}}{2}.
    \]
    Again, an explicit calculation shows that~\eqref{KugelbedingungSK} is fullfilled.
    
    \textbf{Case(+\ -\ -)(b)}: The other solution of~\eqref{eq:case_+--} is 
    \[
    \gamma_1 \gamma_3 = \gamma_2 (\gamma_1 + \gamma_3).
    \]
    We set $\alpha'' := \gamma_1$, $\beta'' := \gamma_3$, whence
	\[
    \gamma_2
    =
    \frac{\alpha'' \beta''}{\alpha'' + \beta''},
    \]
    and use~\eqref{eq:vertex_weights_proof} to infer
    \begin{equation}
    \label{eq:gamma_789}
    \gamma_7
    =
    \mu
    -
    \frac{2 \alpha'' \beta'' + \beta''^2}{\alpha'' + \beta''}
    ,
    \quad
    \gamma_8
    =
    \mu - \frac{\alpha''^2 + 2 \alpha'' \beta''}{\alpha'' + \beta''},    
    \quad
    \gamma_9
    =
    \mu - \alpha'' - \beta''.
    \end{equation}
	Plugging~\eqref{eq:gamma_789} into the yet unused first identity in~\eqref{eq:case_2}, we arrive at
	\begin{align*}
	&- (\alpha'' + \beta'')
	+
	\mu - \alpha'' - \beta''
	=
	- \frac{\alpha''^2}{\alpha'' + \beta''}
	-
	\mu
	+
	\frac{\alpha''^2 + 2 \alpha'' \beta''}{\alpha'' + \beta''}
	\\
	\Leftrightarrow
	\quad
	&
	\beta''
	=
	\frac{\mu - 3 \alpha'' \pm \sqrt{(\mu - 3 \alpha'')^2 + 4 \alpha''(\mu - \alpha'')}}{2}
	=
	\frac{\mu - 3 \alpha'' \pm \sqrt{\mu^2 - 2 \alpha'' \mu + 5 \alpha''^2}}{2}
	\end{align*}
	We observe that only the solution with a plus has a chance to be positive and it is easy to see that this solution takes values in $(0, \mu)$ for all $\alpha'' \in (0, \mu)$.
	We obtain the one-parameter solution set
	\begin{equation}
    \label{eq:solution_Case+--_b}
    \text{\textbf{Case (+\ -\ -) (b)}}
    \quad 
    \begin{cases}
    	\g_1 
		= \alpha''
		&\in \left(0 ,  \mu \right),\\
		\g_2
		&=
		\frac{\alpha'' \beta''}{\alpha'' + \beta''},\\
		\g_3 = \beta''
		&:=
		\frac{\mu - 3 \alpha'' + \sqrt{\mu^2 - 2 \alpha'' \mu + 5 \alpha''^2}}{2},\\
		\g_7
    	&=
    	\mu
    	-
    	\frac{2 \alpha'' \beta'' + \beta''^2}{\alpha'' + \beta''},\\
    	\g_8
    	&=
    	\mu - \frac{\alpha''^2 + 2 \alpha'' \beta''}{\alpha'' + \beta''},\\    
 	  	\g_9
		&=
    	\mu - \alpha'' - \beta''
    \end{cases}
    \end{equation}
    at energy
    \[
     \tilde{\lambda}
     =
     - \frac{\g_1 \g_3}{\g_2}
     +
     \g_9
     =
     \mu - 2 \alpha'' - 2 \beta''
     =
     \alpha - \sqrt{\mu^2 - 2 \alpha'' \mu + 5 \alpha''^2}.
    \]
    Again, an explicit calculation verifies that with these choices, \eqref{KugelbedingungSK} is fullfilled.

    Finally, to conclude the claimed topological properties of the manifolds, we need to verify that the solution space~\eqref{eq:solution_Case+--_a} in \textbf{Case(+\ -\ -)(a)} intersects the solution space~\eqref{eq:solution_Case+--_b} in \textbf{Case(+\ -\ -)(b)} if and only if
    \[
     \g_1 = \g_3 = \g_7 = \g_8 = \frac{2 \mu}{5},
     \quad
     \g_2 = \g_9 = \frac{\mu}{5}.
     \qedhere
    \]   
\end{proof}

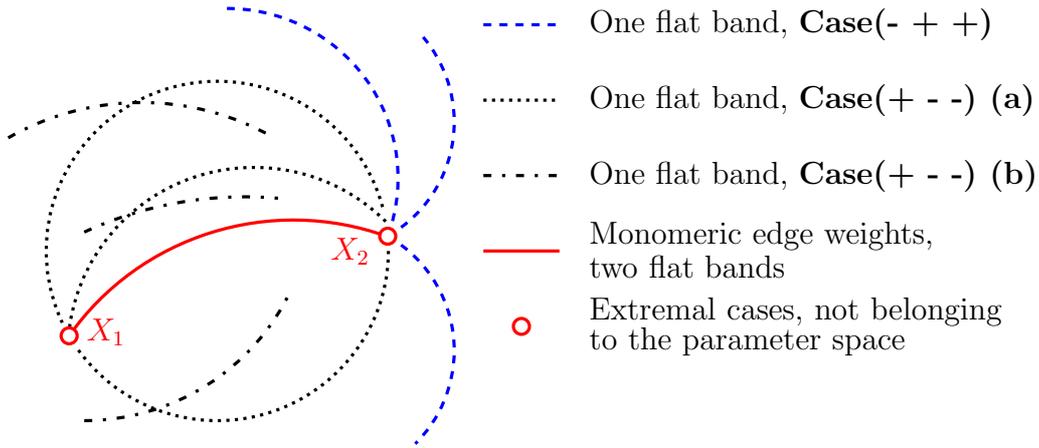
\begin{figure}[ht]
    \begin{tikzpicture}[rotate = 0, scale=1]
     
      % Arcs, describing Case -++ (a) flat bands
      \draw[very thick, dotted] (15:2.25) arc (15:205:2.25);
      \draw[very thick, dotted] (0:2.25) arc (360:215:2.25);
      \draw[very thick, dotted] (10:2.25) arc (45:172:2.45);
      % Lines, describing Case -++ (b) flat bands
      \draw[very thick, loosely dashdotted] (-2.75,1.5) arc (120:60:3.5);
      \draw[very thick, loosely dashdotted] (-1.75,.25) arc (115:85:5);      
      \draw[very thick, loosely dashdotted] (-1.75,-2.25) arc (270:335:3);
      
      % Arcs, describing Case +-- flat bands
%       \draw[very thick, dashed] (15:3) arc (15:105:3);
%       \draw[very thick, dashed] (135:3) arc (135:225:3);
%       \draw[very thick, dashed] (255:3) arc (255:355:3);             

      \draw[very thick, dashed, blue] (5:2.25) arc (-20:90:2.25);
      \draw[very thick, dashed, blue] (5:2.25) arc (-60:40:1.75);
      \draw[very thick, dashed, blue] (5:2.25) arc (60:-45:1.75);
      
      \draw[red, very thick] (210:2.25) arc (145:70:3.6);
      \draw[very thick, red, fill= white] (210:2.25) circle (3pt); 
      \draw[very thick, red, fill = white] (5:2.25) circle (3pt);
        
      \draw[anchor = east] (0:2.15) node {\color{red}$X_2$};
      \draw[anchor = west] (210:2.15) node {\color{red}$X_1$};      
    
     \begin{scope}[xshift = 3.5cm]
     \draw[very thick, dashed, blue] (0,3) -- (1,3);
     \draw[anchor = west] (1.25,3) node {One flat band, \textbf{Case(-\ +\ +)}};
     \draw[very thick, dotted] (0,2) -- (1,2);
     \draw[anchor = west] (1.25,2) node {One flat band, \textbf{Case(+\ -\ -) (a)}};
     \draw[very thick, loosely dashdotted] (0,1) -- (1,1);
     \draw[anchor = west] (1.25,1) node {One flat band, \textbf{Case(+\ -\ -) (b)}};
     \draw[red, very thick] (0,0) -- (1,0);
     \draw[anchor = west] (1.25,.2) node {Monomeric edge weights,} ;
     \draw[anchor = west] (1.25,-.2) node {two flat bands};
     \draw[red, very thick] (0.5,-1) circle (3pt);
     \draw[anchor = west] (1.25,-.8) node {Extremal cases, not belonging};
     \draw[anchor = west] (1.25,-1.2) node {to the parameter space};
    \end{scope}

 \end{tikzpicture}

 \caption{Schematic overview of the topology of the six ``spurious'' one-flat-band solution sets, and the monomeric two-flat-band manifold within the constant-vertex weight parameter space.
 \textbf{Case(-\ +\ +)} solutions asymptotically meet the limit points of the two-flat-band manifold at one end of the parameter range, whereas \textbf{Case(+\ -\ -) (a)} solutions asymptotically meet it at both ends of the parameter range.}
 \label{fig:topology_one_flat_band_solutions}
 
\end{figure}

\begin{remark}
Theorems~\ref{thm:FlatBandsSuperKagome} and~\ref{thm:one_flat_band_SK} imply that the six one-flat-band components and the two-flat-band component are mutually disjoint.
However, a closer analysis of the extremal cases in Formulas~\eqref{eq:solution_Case-++}, \eqref{eq:solution_Case+--_a}, and~\eqref{eq:solution_Case+--_b}, as well as of the monomeric case, implies that when sending the parameters to their extremal values, the three one-dimensional manifolds corresponding to \textbf{Case(+\ -\ -) (a)}, and the two-flat-band-manifold of solutions converge to the two points
\begin{align*}
X_1 := \left(0,0,0,\frac{\mu}{2},\frac{\mu}{2},\frac{\mu}{2} \right)
\quad
\text{and}
\quad
X_2
:=
\left(\frac{\mu}{2},\frac{\mu}{2},\frac{\mu}{2},0,0,0 \right),
\end{align*}
which themselves do no longer belong to the space of admissible parameters.
Likewise, the limit of solutions of \textbf{Case(+\ -\ -)} in~\eqref{eq:solution_Case-++} corresponding to $\alpha' = \frac{\mu}{2}$ corresponds to the the point $X_2$, see also Figure~\ref{fig:topology_one_flat_band_solutions}.
\end{remark}

	\subsection*{Acknowledgement}{JK would like to thank the Bergische Universität Wuppertal where parts of this project were done while being on leave from the FernUniversität in Hagen. JK and MT also acknowledge support by the Cost action CA18232 through the summer school ``Heat Kernels and Geometry: From Manifolds to Graphs'' held in Bregenz.
	MT would like to thank the Mittag-Leffler Institute where parts of this work were initiated during the trimester Program ``Spectral Methods in Mathematical Physics''.}

\vspace*{0.5cm}

% {\small
% 	\bibliographystyle{amsalpha}
% 	\bibliography{Literature}}

\newcommand{\etalchar}[1]{$^{#1}$}
\def\cprime{$'$} \def\polhk#1{\setbox0=\hbox{#1}{\ooalign{\hidewidth
  \lower1.5ex\hbox{`}\hidewidth\crcr\unhbox0}}}
\providecommand{\bysame}{\leavevmode\hbox to3em{\hrulefill}\thinspace}
\providecommand{\MR}{\relax\ifhmode\unskip\space\fi MR }
% \MRhref is called by the amsart/book/proc definition of \MR.
\providecommand{\MRhref}[2]{%
  \href{http://www.ams.org/mathscinet-getitem?mr=#1}{#2}
}
\providecommand{\href}[2]{#2}

\end{document}